\pgfplotsset{compat=newest}
\newcommand{\1}{\mathbb{I}}
\renewcommand{\ell}{L}
\def\R{\mathbb{R}}
\newcommand{\eps}{\varepsilon}
\newcommand{\lb}{\left\{}
\newcommand{\rb}{\right\}}
\renewcommand{\P}{{\mathbb P}}
\renewcommand{\kappa}{\varkappa}
\begin{document}
\title{Extreme value analysis for mixture models with heavy-tailed impurity 
\thanks{ 
The first author is supported by the RSF grant No 17-11-01098.  }}

\titlerunning{Extreme value analysis for mixture models with heavy-tailed impurity }

\author{Vladimir Panov $ (corresponding \; author)$\\Ekaterina Morozova
}

\authorrunning{V.~Panov and E.~Morozova} 

\institute{HSE University\\
Laboratory of Stochastic Analysis and its Applications\\
             Pokrovsky boulevard 11, 109028 Moscow, Russia\newline
             \\                         
	\email{vpanov@hse.ru         \and  morekaterina@gmail.com
	}
}

\date{Received: \today}
\maketitle

\begin{abstract}
This paper deals with the extreme value analysis for the triangular arrays, which appear when some parameters of the mixture model vary as the number of observations grow. When the mixing parameter is small, it is natural to associate one of the components with "an impurity" (in case of regularly varying distribution, "heavy-tailed impurity"), which "pollutes" another component.  
We show that 
 the set of possible limit distributions is much more diverse than in the classical Fisher-Tippett-Gnedenko theorem, and provide the numerical examples showing the efficiency of the proposed model for studying the maximal values of  the stock returns. 
 
\keywords{heavy-tailed distributions, extreme values, mixture model, triangular arrays}
\subclass{60G70,  60F99}
\end{abstract}

\setcounter{tocdepth}{3}
\setcounter{secnumdepth}{3}

\section{Introduction}
Consider the mixture model
\begin{eqnarray}\label{main}
F(x; \eps, \vec{\theta}) = (1-\eps) F^{(1)}(x; \vec\theta^{(1)}) + \eps F^{(2)}(x, \vec{\theta}^{(2)}),
\end{eqnarray} 
where \(\eps \in (0,1) \) is a mixture parameter, \(F^{(1)}(x; \vec{\theta}^{(1)})\) and \(F^{(2)}(x; \vec{\theta}^{(2)})\) are CDFs of two distributions parametrised by vectors \(\vec{\theta}^{(1)}, \vec{\theta}^{(2)}\) correspondingly, and \(\vec{\theta}=(\vec{\theta}^{(1)} , \vec{\theta}^{(2)})\). In this paper we focus on the case when the second component in this mixture corresponds to some heavy-tailed distribution, while the first one can be either light- or heavy-tailed.   When \(\eps\) is small, the second component can be referred to as \textit{the heavy-tailed impurity.}\footnote{The term "heavy-tailed impurity" is known in the context of percolation theory, see~\cite{vanBN}. Here we use it in more general set-up, following~\cite{gm}.} Some applications of this approach are described by Grabchak and Molchanov (\citeyear{gm}). For instance,  in population dynamics,  this approach can be used for modelling the migration of species: the distance of migration of most species can be modelled by light-tailed distribution, but there is a small number of species with  "very active" behaviour. 

It would be a worth mentioning that the parameters \(\eps\) and \(\vec\theta\)  may depend on the number of available observations, denoted below by \(n\). For instance, in the aforementioned example from population dynamics, the proportion of "very active" species decays when the total number of species grows.  In this context, the distribution of resulting variable changes with \(n\), and this model can be considered as the infinitesimal triangular array   --- a collection of real random variables \(\{ X_{nj}, j=1..k_{n}\},\) \(k_{n} \to \infty\) as \(n \to \infty,\)  such that \(X_{n1}, ..., X_{nk_{n}}\) are independent for each \(n\). The classical limit theorems for this class of models are well-known in the literature, see, e.g., monographs by Petrov (\citeyear{Petrov}), Meerschaert and Scheffler (\citeyear{ms}). For instance, it is known that the class of possible non-degenerate limit laws of the sums \(X_{n1}+...+X_{n k_{n}} -c_{n}\) with deterministic \(c_n\) and triangular array \(\{ X_{nj}, j=1..k_{n}\}\)  satisfying the assumption of infinite smallness,\begin{eqnarray*}
\label{inf}
\forall\; \delta>0: \qquad 
\sup_{j=1..k_{n}}
\P \left\{
	\left|
		X_{nj}
	\right| 
	> \delta
\right\} \to 0, \qquad n \to \infty,
\end{eqnarray*}
 coincides with the class of infinitely-divisible distributions.

Surprisingly, there are very few  papers dealing with the extreme value analysis for this model. To the best of our knowledge, there exists no general statements describing the class of non-degenerate limits of 
\begin{eqnarray}
\label{lim}
(\max_{j=1..k_n} X_{nj} - c_n)/s_n, \qquad n \to \infty, 
\end{eqnarray}
with deterministic \(c_n, s_n\). Clearly, the convergence to types theorem  is  applicable to this situation, and guarantees that the limit law is determined up to the change of location and scale. Nevertheless, unlike the well-known Fisher-Tippett-Gnedenko theorem, the class of limit distributions in~\eqref{lim} includes not only the  Gumbel, Fr{\'e}chet and Weibull laws.  Some conditions guaranteeing the convergence of the triangular array to some limit are given by Freitas and H{\"usler} (\citeyear{FH}), but their result  essentially employ the assumption that the limit distribution is twice differentiable, which is violated in the examples of the model~\eqref{main} provided below.
 Let us mention here that other known papers on this topic are concentrated on some particular examples yielding convergence to the Gumbel law, see Anderson, Coles and H{\"u}sler (\citeyear{ACH}),  Dkengne, Eckert and Naveau (\citeyear{DEN}).

In the first part of the paper (Section~\ref{WRV}) we consider the particular case of~\eqref{main}, when the first  component has the Weibull distribution (and therefore it is in the maximum domain of attraction of the Gumbel law), while the heavy-tailed impurity is modelled by the regularly varying  distribution (MDA of the Fr{\'e}chet law). Note that in the classical setting, when the parameters \(\eps\) and \(\vec{\theta}\) are fixed, the limit behaviour of the sum is determined by the second  component, and therefore the maximum under proper normalisation converges to the Fr{\'e}chet law.  Interestingly enough, even in the case, when only one parameter (namely the mixing parameter \(\eps\)) varies, the set of possible limit distributions includes Gumbel and Fr{\'e}chet distributions  and also one discontinuous law.  The exact statement is formulated in Theorem~\ref{thm1}.

In Section~\ref{WTRV} we turn towards  more complicated model, which appears when one uses the truncated regularly varying distribution for the second component, and the truncation level \(M\) grows with \(n.\) This part of our research is motivated by a discussion  concerning the choice between truncated and non-truncated Pareto-type distributions, see Beirlant, Alves and Gomes (\citeyear{BIG}). The asymptotic behaviour depends on the rate of growth of \(M\): as we show, the resulting conditions are related to the soft, hard and intermediate truncation regimes introduced by  Chakrabarty and Samorodnitsky (\citeyear{CS}). 
 Note that in that paper it is shown that the softly truncated regularly varying distribution has heavy tails (understood in the sense of the non-Gaussian limit law for the sum), and therefore the term "heavy-tailed impurity" can be also used for  models of this kind. 

The main theoretical contribution of our research is formulated as Theorem~\ref{thm2},  dealing with the case when both \(\eps\) and \(M\) depend on \(n.\) It turns out that the set of possible limit laws in~\eqref{lim} includes 6 various distributions, and, for some sets of parameters, maximal value diverges under any (also nonlinear) normalisation.  Our theoretical findings are illustrated by the simulation study (Section~\ref{ss}). 

The choice of the  Weibull distribution for the first component in~\eqref{main} is partially based on the great popularity of this distribution in applications, see, e.g.,  the overview by Laherrere and Sornette (\citeyear{LS}). As we  show in  Section~\ref{BMW}, our model with heavy-tailed impurity is more appropriate for modelling the stock returns as a "pure" model. In this context, our paper continues the discussion started in the paper by Malevergne, Pisarenko and Sornette (\citeyear{MPS}), where it is shown that the tails of the empirical distribution of log-returns decay slower than the tails of the Weibull distribution but faster than the power law.

\section{Weibull-RV mixture}
\label{WRV}
In this section, we  focus on a particular case of the model~\eqref{main}, namely
\begin{equation}
\label{mixture}
F(x; \eps, \vec{\theta}) = (1-\eps)F_1 (x; \lambda, \tau) + \eps F_2(x;  \alpha) 
\end{equation}
where \(\vec{\theta}=(\lambda,\tau, \alpha)\), \(F_1\) is the distribution function of the Weibull law,
\begin{equation}\label{Weib}
F_1 (x; \lambda, \tau)  = F_1(x)= 1-e^{-\lambda x^{\tau}}, \quad x\geq 0, \lambda>0, \tau>0,
\end{equation}
and \(F_2\) corresponds to the regularly varying distribution on \([m,\infty)\),
\begin{equation}\label{RV}
F_2  (x; \alpha) = F_2(x)= 1 - x^{-\alpha} \ell(x), \quad \alpha>0, \quad x\in [m, \infty), 
\end{equation}
with $m = \inf\left\{ x > 0: F_2 (x) >0\right\}$ and a continuous slowly varying function $\ell(\cdot)$. Let us recall that  by definition,
\begin{eqnarray*}
\lim_{x \to \infty} \ell(tx)/\ell(x) = 1, \qquad \forall t >0, 
\end{eqnarray*}
and the term "slow variation" comes from the property 
\begin{eqnarray}\label{prop1}
x^{-\epsilon} L(x) \to 0 \quad\mbox{ and }\quad 
x^{\epsilon} L(x) \to \infty \qquad \mbox{as } x \to \infty
\end{eqnarray}
for every \(\epsilon>0.\) The extensive overview of the properties of slowly varying functions is given in \cite{Bingham} and \cite{Resnick}.

As we already mentioned in the introduction,  the first component  is in the MDA of the Gumbel law, while the second is in the MDA of the Fr{\'e}chet law. In Appendix~\ref{app1}, we show that the the mixture distribution function \(F\) is in the MDA of the Fr{\'e}chet law, provided that  the parameters \(\eps\) and \(\vec{\theta}\) are fixed.


In what follows we consider the  case when the mixing parameter \(\eps=\eps_n\) decays to zero as \(n\) grows. It is natural  to slightly generalise the model to the form of row-wise independent triangular array
\begin{eqnarray}
\label{triang}
X_{nj} \sim F(x; \eps_n, \vec{\theta}), \qquad n\geq 1,\qquad  j = 1 .. k_n, \end{eqnarray}  where   \(k_n\) is an unbounded increasing sequence, and for any \(i=1..n\),  the r.v.'s $X_{nj}, j=1..k_n$ are independent. The set-up allowing various numbers of elements in different rows is standard both in studying the classical limit laws (see~\cite{Petrov}) and in the extreme value theory (see~\cite{DEN}).

As we show in the next theorem,  the asymptotic behaviour of the maximum in this model is determined by the rate of growth of \(k_n\), the rate of decay of \(\eps_n\) and the slowly varying function \(L.\)  Note that the rates of \(\log k_n\)  and \(k_n \eps_n\) are compared in terms of the following three alternative conditions,
\[
\tag{A1}\label{A1}
\exists \beta>\nicefrac{\tau}{\alpha}: \qquad
\lim\limits_{n\to\infty} \frac{\log k_n}{(k_n \eps_n)^{\beta}} = \infty,\]
\[
\tag{A2}\label{A2}
\exists \beta \in (0,\nicefrac{\tau}{\alpha}): \qquad 
\lim\limits_{n\to\infty} \frac{\log k_n}{(k_n \eps_n)^{\beta}} = 0,
\]
\[\tag{A3}
\label{A3}
\exists{c>0}: \qquad
k_n \eps_n = c(\log k_n)^{\nicefrac{\alpha}{\tau}}.
\]

  
\begin{figure}[h]
\begin{tikzpicture}[sibling distance=3cm, punkt/.style={rectangle, draw=black}, block/.style={rectangle, draw=black}]
\node {$k_n \eps_n$}
child { node[punkt, rectangle split, rectangle split parts=2]{$k_n \eps_n \xrightarrow[n\to\infty] {}0$
\nodepart{second} Gumbel}}
child { node {$k_n \eps_n \xrightarrow[n\to\infty] {} \infty$}
child {[sibling distance=3.6cm] child{ node[punkt, rectangle split, rectangle split parts=2] {$\exists \beta\in(0,\nicefrac{\tau}{\alpha}): \lim\limits_{n\to\infty} \frac{\log k_n}{(k_n \eps_n)^{\beta}} = 0$
         \nodepart{second} Fr\'echet}}
child{node{$k_n \eps_n = c(\log k_n)^{\nicefrac{\alpha}{\tau}}$}
child{child{node[punkt, rectangle split, rectangle split parts=2]{$\ell(u)\xrightarrow[u\to\infty]{}\infty$
\nodepart{second} Fr\'echet}}
child{child{node[punkt, rectangle split, rectangle split parts=2]{$\ell(u)\xrightarrow[u\to\infty]{}\tilde{c}>0$
\nodepart{second} $\begin{cases}0,& x< \lambda^{-\nicefrac{1}{\tau}} (c\tilde{c})^{-\nicefrac{1}{\alpha}},\\
e^{-(1+x^{-\alpha})},& x=\lambda^{-\nicefrac{1}{\tau}} (c\tilde{c})^{-\nicefrac{1}{\alpha}},\\
e^{-x^{-\alpha}},& x>\lambda^{-\nicefrac{1}{\tau}} (c\tilde{c})^{-\nicefrac{1}{\alpha}}
\end{cases}$}}}
child{node[punkt, rectangle split, rectangle split parts=2]{$\ell(u)\xrightarrow[u\to\infty]{}0$
\nodepart{second} Gumbel}}}}
child {node[punkt, rectangle split, rectangle split parts=2]{$\exists \beta>\nicefrac{\tau}{\alpha}: \lim\limits_{n\to\infty} \frac{\log k_n}{(k_n \eps_n)^{\beta}} = \infty$
\nodepart{second} Gumbel}}}}
child { node[punkt, rectangle split, rectangle split parts=2] {$k_n \eps_n = const$
\nodepart{second} Gumbel}};
\end{tikzpicture}
\caption{Possible limit distributions for maxima of the triangular array~\eqref{mixture}}
\label{fig:thm_1}
\end{figure}
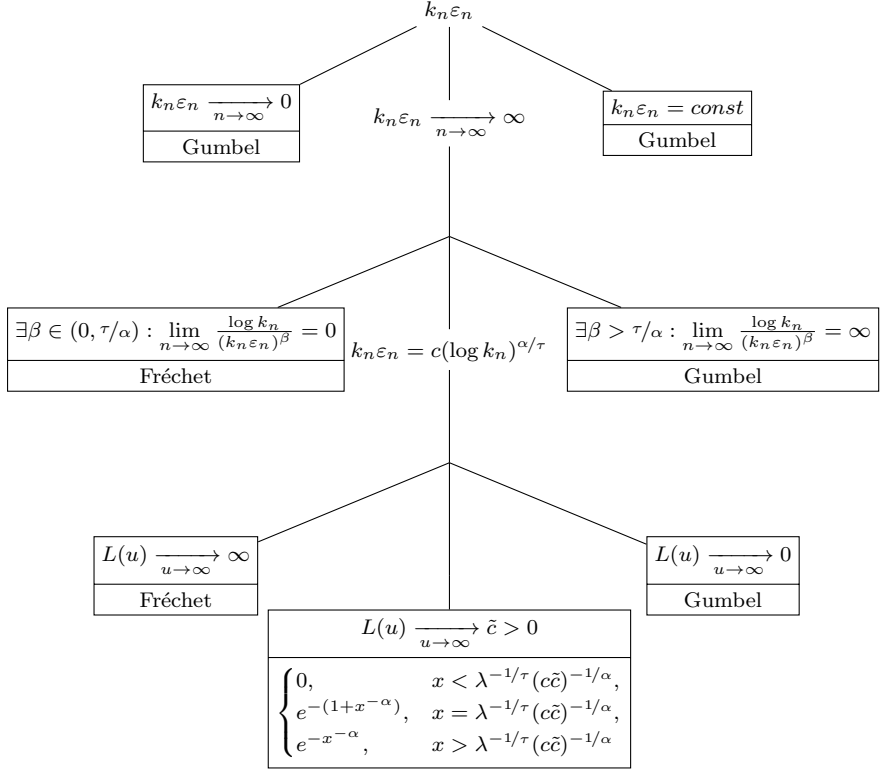

\begin{theorem}
\label{thm1}
Consider the row-wise independent triangular array~\eqref{triang}. Assume that 
\(\lim_{x \to \infty} L(x) \in [0, \infty].\)\footnote{This assumption means that 
the slowly varying function \(L(x)\) doesn't exhibit infinite oscillation. The counterexample to this condition is given  in~\cite{MikoschRegular}, Example~1.1.6.}
 Then for any sequences \(k_n\to\infty\), $\eps_n\to 0$  there exist deterministic  sequences \(c_n, s_n\) such that 
\begin{equation}\label{state}
\lim\limits_{n\to\infty} \P\lb\max\limits_{j=1,\ldots, k_n} X_{nj} \leq s_n x + c_n\rb = H(x), \quad \forall x\in\R
\end{equation}
with some non-degenerate limit law \(H(x).\) More precisely, \(H(x)\) belongs to the type of the following three distribution functions\footnote{Due to the convergence to types theorem (Theorem A1.5 from~\cite{EKM}), if \(H(x)\) is the distribution function of the limit law in~\eqref{state}, then any other non-degenerate law appearing in~\eqref{state} under another normalisation is of the form \(H(ax+b)\) with some constants \(a,b\).}:

\begin{enumerate}[(i)]
\item \underline{Gumbel distribution,} \(H(x)=e^{-e^{-x}}\), if and only if any of the following conditions is satisfied 
\begin{enumerate}
\item $k_n\eps_n\to 0$ as $n\to\infty$ or $k_n \eps_n = const>0$;
\item $k_n\eps_n\to \infty$ as $n\to\infty$, and~\eqref{A1} holds;
\item \eqref{A3} holds, and $\ell(u)\to 0$ as $u\to\infty$.
\end{enumerate}
In all cases, possible choice of the normalising sequences is  
\begin{eqnarray}\label{const1}
 s_n = (\lambda\tau)^{-1} (\lambda^{-1} \log k_n)^{\nicefrac{1}{\tau} - 1}\quad\mbox{ and } \quad c_n = (\lambda^{-1} \log k_n)^{\nicefrac{1}{\tau}}.\end{eqnarray}
\item \underline{Fr{\'e}chet distribution} with parameter \(\alpha\), \(H(x)=e^{-x^{-\alpha}}\), if and only if any of the following conditions is satisfied 
\begin{enumerate}
\item \eqref{A2} holds;
\item \eqref{A3} holds, and \(\ell(u) \to \infty\) as \(u \to \infty\).
\end{enumerate}
In all cases, one can take 
\begin{eqnarray}\label{const2}
s_n = F_2 ^{\leftarrow} (1-(k_n \eps_n)^{-1}), \qquad \mbox{ and} \qquad c_n = 0,
\end{eqnarray}
where \(F_2 ^{\leftarrow}(y)=\inf\{x \in \R: F_2(x) \geq y\}\) for \(y \in [0,1].\)
\item \underline{Discontinuous distribution} with cdf 
\[
H(x)=
\begin{cases}
0,& x< \lambda^{-\nicefrac{1}{\tau}} (c\tilde{c})^{-\nicefrac{1}{\alpha}},\\
e^{-(1+x^{-\alpha})},& x=\lambda^{-\nicefrac{1}{\tau}} (c\tilde{c})^{-\nicefrac{1}{\alpha}},\\
e^{-x^{-\alpha}},& x>\lambda^{-\nicefrac{1}{\tau}} (c\tilde{c})^{-\nicefrac{1}{\alpha}},
\end{cases}\]
if  \eqref{A3} holds, and $\ell(u)\to\tilde{c}$ for some $\tilde{c}>0$ as $u\to\infty$. The normalising sequences can be fixed in the form~\eqref{const2}.
\end{enumerate}
\end{theorem}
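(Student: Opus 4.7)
The plan is to reduce everything to the asymptotics of
$-k_n\log F(u_n;\eps_n,\vec{\theta})=k_n[(1-\eps_n)\bar F_1(u_n)+\eps_n\bar F_2(u_n)]+o(1)$
for $u_n=s_n x+c_n$ (with $\bar F_i=1-F_i$), via the Taylor expansion $\log(1-t)=-t+O(t^2)$ applied to the small quantity $1-F(u_n)$. The two summands are then studied separately under two candidate normalisations: the \emph{Weibull} choice~\eqref{const1}, for which $u_n\sim(\lambda^{-1}\log k_n)^{1/\tau}$ and the classical computation gives $k_n\bar F_1(u_n)\to e^{-x}$; and the \emph{regularly varying} choice~\eqref{const2}, for which the definition of $s_n$ together with regular variation of $\bar F_2$ yields $k_n\eps_n\bar F_2(s_n x)\to x^{-\alpha}$ for $x>0$. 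Only one normalisation gives a non-degenerate contribution in each regime, and which one it is depends on the comparison between $\log k_n$ and $k_n\eps_n$ via~\eqref{A1}--\eqref{A3}, together, on the boundary, with the behaviour of $L$.

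For case~(i) (Gumbel) I take~\eqref{const1} and must show the second summand is negligible, i.e.\ $k_n\eps_n\bar F_2(u_n)\to 0$. Since $u_n\asymp(\log k_n)^{1/\tau}$, this reduces to $k_n\eps_n(\log k_n)^{-\alpha/\tau}L(u_n)\to 0$. Subcase~(a) is immediate because $k_n\eps_n$ stays bounded and $L(u_n)$ is controlled by~\eqref{prop1}. Under~(b), \eqref{A1} forces $k_n\eps_n=o((\log k_n)^{1/\beta})$ with $1/\beta<\alpha/\tau$, and~\eqref{prop1} again absorbs $L(u_n)$. Under~(c), the prefactor $k_n\eps_n(\log k_n)^{-\alpha/\tau}$ is a positive constant by~\eqref{A3}, and the extra assumption $L(u)\to 0$ closes the case.

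For case~(ii) (Fr\'echet) I take~\eqref{const2} and must show the Weibull summand vanishes, i.e.\ $\lambda(s_n x)^\tau-\log k_n\to\infty$. Inverting $s_n^\alpha=k_n\eps_n L(s_n)$ gives $s_n^\tau\sim(k_n\eps_n L(s_n))^{\tau/\alpha}$. Under~\eqref{A2} the factor $(k_n\eps_n)^{\tau/\alpha}$ dominates $\log k_n$ (the slowly varying $L(s_n)^{\tau/\alpha}$ being absorbed by~\eqref{prop1}); under~\eqref{A3} with $L(u)\to\infty$, the factor $k_n\eps_n$ is only borderline, and it is $(L(s_n))^{\tau/\alpha}\to\infty$ that pushes $\lambda(s_n x)^\tau$ past the critical size $\log k_n$.

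The discontinuous case~(iii) is where the main technical obstacle lies. Under~\eqref{A3} with $L(u)\to\tilde{c}$ one still uses~\eqref{const2}, obtaining $s_n\sim(c\tilde{c})^{1/\alpha}(\log k_n)^{1/\tau}$, so that the two summands are now of the same order of magnitude. A direct computation yields $\lambda(s_n x)^\tau=(x/x^*)^\tau(L(s_n)/\tilde{c})^{\tau/\alpha}\log k_n$ with $x^*=\lambda^{-1/\tau}(c\tilde{c})^{-1/\alpha}$, so that $k_n\bar F_1(s_n x)=k_n^{1-(x/x^*)^\tau(L(s_n)/\tilde{c})^{\tau/\alpha}}$ tends to $0$ for $x>x^*$ and to $\infty$ for $x<x^*$; combined with $k_n\eps_n\bar F_2(s_n x)\to x^{-\alpha}$ this already produces $H(x)=e^{-x^{-\alpha}}$ on $(x^*,\infty)$ and $H(x)=0$ on $(-\infty,x^*)$. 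The value at the jump $x=x^*$ reflects a fine balance $(L(s_n)/\tilde{c})^{\tau/\alpha}-1=o(1/\log k_n)$ along the sequence $s_n$, which is where the most delicate bookkeeping of the slowly varying factor is needed; once this is settled one obtains $k_n\bar F_1(s_n x^*)\to 1$, which combines with the Fr\'echet contribution to give $H(x^*)=e^{-(1+(x^*)^{-\alpha})}$. Uniqueness of the limit up to type, for all three cases, is then packaged by the convergence-to-types theorem invoked in the footnote following~\eqref{state}.
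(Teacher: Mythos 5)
Your proposal is correct and follows essentially the same route as the paper's proof in Appendix~\ref{app2}: the same reduction of the product to $\exp\{-\lim_{n}(k_n\bar F_1(v_n(x))+k_n\eps_n\bar F_2(v_n(x)))\}$, the same two candidate normalisations \eqref{const1} and \eqref{const2}, and the same case split along \eqref{A1}--\eqref{A3} combined with the limiting behaviour of $L$. The one step you single out as delicate --- the value at the atom $x^*=\lambda^{-\nicefrac{1}{\tau}}(c\tilde c)^{-\nicefrac{1}{\alpha}}$, which requires $(L(s_n)/\tilde c)^{\nicefrac{\tau}{\alpha}}-1=o(1/\log k_n)$ --- is handled no more rigorously in the paper itself, which evaluates $k_n^{1-\lambda(cL(s_n))^{\nicefrac{\tau}{\alpha}}x^\tau}$ at $x=x^*$ as if $L(s_n)$ were already equal to $\tilde c$, so on that point your account and the paper's proof stand at the same level.
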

\begin{proof}
The proof is given in Appendix~\ref{app2}.
\end{proof}

The graphical representation of this result is presented in Figure~\ref{fig:thm_1}.

\section{Weibull-truncated RV mixture}\label{WTRV}
Now we consider one more complicated model, such that  the distribution of the second component in~\eqref{mixture} also changes as \(n\) grows. 
Consider the mixture distribution
\begin{equation}
\label{mixture2}
F(x; \eps, M, \vec{\theta}) = (1-\eps)F_1 (x; \lambda, \tau) + \eps \widetilde{F}_2(x; \alpha,  M),
\end{equation}
where as before, \(\vec{\theta}=(\lambda,\tau, \alpha)\), \(F_1\) is the distribution function of the Weibull law (see~\eqref{Weib}), while \(\widetilde{F}_2\) is the upper-truncated regularly varying distribution,
\begin{equation}
\label{truncated RV}
\widetilde{F}_2(x; \alpha, M) = 
\begin{cases}
\frac{F_2 (x;  \alpha)}{F_2 (M; \alpha)},& \text{ if } x\in [m, M],\\
1,& \text{ if } x>M
\end{cases}
\end{equation}
with  $F_2 (x;  \alpha)$ corresponding to a regularly varying distribution~\eqref{RV}. 

It would be an interesting mentioning that  the components in this model correspond to different maximum domains of attraction: the maximum for the first component under proper normalisation converges to the Gumbel law, while the second --- to the Weibull law, see Appendix~\ref{app3}.  

By analogue with~\eqref{triang}, we consider the triangular array 
\begin{eqnarray}
\label{triang2}
X_{nj} \sim F(x; \eps_n, M_n, \vec{\theta}), \qquad n\geq 1,\qquad  j = 1 .. k_n, \end{eqnarray}  where   \(k_n, M_n\) are unbounded increasing sequences, and for any \(i=1..n\),  the r.v.'s $X_{nj}, j=1..k_n$ are independent.  Note that the classical limit laws for this model (law of large numbers  and limit theorems for the sums)  are essentially established in \cite{Panov2017}.

The next theorem reveals the asymptotic behaviour of the maximal value depending on the rates of \(\eps_n, M_n, k_n\), and the properties of the slowly varying function \(L.\) An important difference from the model considered in Section~\ref{WRV}  is that in some cases  the limit distribution is degenerate for any (also non-linear) normalising sequence.

It turns out that if  \(k_n \eps_n\)  tends to any finite constant, then the limit distribution is Gumbel. Our findings in the remaining case \(k_n \eps_n \to \infty\) are presented in Table~\ref{tab:thm2}. The asymptotic behaviour of the maximum is determined by the asymptotic properties of the sequences \(k_n, \eps_n\) in terms of \eqref{A1}-\eqref{A3}, and the rate of growth of  \(M_n\) in terms of the following three alternating conditions:
\[\tag{M1} \label{M1}
\exists \gamma>\nicefrac{1}{\alpha}: \qquad
\lim\limits_{n\to\infty} \frac{M_n}{(k_n \eps_n)^{\gamma}} =\infty,
\]
\[\tag{M2}
\label{M2}
\exists \gamma\in(0,\nicefrac{1}{\alpha}): 
\qquad \lim\limits_{n\to\infty} \frac{M_n}{(k_n \eps_n)^{\gamma}}=0; \]
\[\tag{M3}
\label{M3} \exists \breve{c}>0: \qquad
M_n = \breve{c}(k_n \eps_n)^{\nicefrac{1}{\alpha}}.\]
The conditions \eqref{M1}-\eqref{M3} are  related to the notion of hard- and soft truncation. Following~\cite{CS}, we say that that a variable \(\eta\) is truncated softly, if 
\begin{eqnarray}\label{soft}
\lim_{n \to \infty} k_n \P\bigl\{ | \eta | \geq M_n \bigr\} = 0.
\end{eqnarray}
For a regularly varying distribution of \(\eta\), the condition~\eqref{soft} holds if there exists \(\gamma>1/\alpha\) such that \(M_n/k_n^\gamma \to \infty\). This fact follows from 
\begin{eqnarray*}
 k_n \P\bigl\{ | \eta | \geq M_n \bigr\}  = k_n M_n^{-\alpha} L(M_n) \lesssim k_n M_n^{-\alpha+ \epsilon}=(M_n/k_n^{1/(\alpha-\epsilon)})^{(-\alpha+\epsilon)}\end{eqnarray*}
for any $\epsilon>0$\footnote{Here and below we mean by $f_n \gtrsim g_n$ that $\lim\limits_{n\to\infty}( f_n/ g_n)=\infty$.}
. Analogously, \(\eta\) is truncated hard, that is,
 \begin{eqnarray*}
\lim_{n \to \infty} k_n \P\bigl\{ | \eta | \geq M_n \bigr\} = \infty,
\end{eqnarray*}
if there exists \(\gamma \in (0,1/\alpha)\) such that \(M_n/k_n^\gamma \to 0.\) 

Our results for the case \eqref{M1} (see first raw in Table~\ref{tab:thm2}) coincide with the findings from \cite{CS}:  \textit{in the soft truncation regime, truncated power tails behave, in important respects, as if no truncation took place.}  In fact, in our setup, the results are completely the same as for the non-truncated distribution considered in Theorem~\ref{thm1}. 

Our outcomes for \eqref{M2} (second raw in Table~\ref{tab:thm2}) are quite close to another finding from \cite{CS}, namely, 
\textit{in the hard truncation regime much of "heavy tailedness" is lost}. Actually, we get that the behaviour is determined by the first component except the case~\eqref{A2} with \(\lim_{n \to \infty} k_n e^{-\lambda M_n ^{\tau}} \ne \infty\).

Finally, the intermediate case~\eqref{M3} (third raw in Table~\ref{tab:thm2}) is divided into various subcases. The comparison with \cite{CS} is not possible because the authors decide \textit{to largely leave} this question \textit{aside in this article, in order to keep its size manageable}. In our research, we provide the complete study of this case.

The exact result is formulated below. 
\begin{table}[t]
\caption{Possible limit distributions for maxima of the triangular array~\eqref{triang2}}
\label{tab:thm2}
\begin{tabular}{c|c|c|c}
\hline
 \(k_n \eps_n \underset{n\to\infty}{\longrightarrow}\infty\)& \eqref{A1} & \eqref{A2} & \eqref{A3} \\ \hline
 \multirow{6}{*}{\eqref{M1}} & \multirow{6}{*}{Gumbel} & \multirow{6}{*}{Fr\'echet} & Gumbel, \\ 
 & & &  if \(L(u)\underset{u\to\infty}{\longrightarrow} 0\) \\ \cdashline{4-4}[1pt/1pt]
 & & &  Fr\'echet, \\ 
 & & & if \(L(u)\underset{u\to\infty}{\longrightarrow} \infty\) \\ \cdashline{4-4}[1pt/1pt]
 & & & Distribution~I, \\
 & & & if \(L(u) \underset{u\to\infty}{\longrightarrow}\tilde{c} \in (0, \infty)\)\\ \hline
 \multirow{4}{*}{\eqref{M2}} & \multirow{4}{*}{Gumbel} & Gumbel,  & \multirow{4}{*}{Gumbel} \\
 & & if \(k_n \gtrsim e^{\lambda M_n ^{\tau}}\) &\\ \cdashline{3-3}[1pt/1pt]
 & & no limit, & \\
 & & if \(k_n \gtrsim e^{\lambda M_n ^{\tau}}\) is not fulfilled \\ \hline
 \multirow{15}{*}{\eqref{M3}} & \multirow{15}{*}{Gumbel} & \multirow{2}{*}{Fr\'echet,}  & Gumbel, \\ 
 & & & if \(L(u)\underset{u\to\infty}{\longrightarrow} 0\) \\ \cdashline{4-4}[1pt/1pt]
 & & \multirow{3}{*}{if \(L(u)\underset{u\to\infty}{\longrightarrow} 0\)} & Gumbel, \\ 
 & & & if \(L(u)\underset{u\to\infty}{\longrightarrow}\tilde{c}\in(0, \infty]\), \\ 
 & & & and \(\lambda ^{\nicefrac{1}{\tau}}\breve{c} c^{\nicefrac{1}{\alpha}}\in(0, 1)\) \\\cdashline{3-4}[1pt/1pt]
 & & \multirow{3}{*}{Distribution~II, } & Distribution~III, \\
 & & & if \(L(u)\underset{u\to\infty}{\longrightarrow}\tilde{c}\in(0, \infty)\), \\ 
 & & & and \(\lambda ^{\nicefrac{1}{\tau}}\breve{c} c^{\nicefrac{1}{\alpha}}>1\) \\ \cdashline{4-4}[1pt/1pt]
 & & \multirow{3}{*}{if \(L(u) \underset{u\to\infty}{\longrightarrow} \tilde{c}\in(0, \infty)\)} & Distribution~IV,  \\
 & & & if \(L(u)\underset{u\to\infty}{\longrightarrow}\tilde{c}\in(0, \infty)\),\\
 & & & and \(\lambda ^{\nicefrac{1}{\tau}}\breve{c} c^{\nicefrac{1}{\alpha}}=1\) \\ \cdashline{3-4}[1pt/1pt]
 & & no limit, & no limit, \\
 & & \multirow{2}{*}{if \(L(u)\underset{u\to\infty}{\longrightarrow} \infty\)} & if \(L(u)\underset{u\to\infty}{\longrightarrow} \infty\), \\ 
 & & & and \(\lambda ^{\nicefrac{1}{\tau}}\breve{c} c^{\nicefrac{1}{\alpha}}\geq 1\) \\ \hline
\end{tabular}
\end{table}
\begin{theorem}
\label{thm2}
Consider the row-wise independent triangular array~\eqref{triang2} under the assumption that  \(\eps_n \to 0, M_n \to \infty, k_n \to \infty\) as \(n \to \infty.\) Assume also that 
\(\lim_{x \to \infty} L(x) \in [0, \infty].\) 

Then the non-degenerate limit law \(H(x)\) for the properly normalised row-wise maximum \(\max\limits_{j=1,\ldots, k_n} X_{nj} \) (see~\eqref{state}) belongs to the type of the following  distributions. 

\begin{enumerate}[1.]
\item \underline{Gumbel distribution,} \(H(x)=e^{-e^{-x}}\), if and only if any of the following conditions  is satisfied \begin{enumerate}
\item[1.1] $k_n\eps_n\to 0$ as $n\to\infty$ or $k_n \eps_n = const>0;$
\item[1.2] $k_n\eps_n\to \infty$ as $n\to\infty$, and moreover 
\begin{itemize}
\item \eqref{M1} and \eqref{A1} hold;
\item  \eqref{M1} and \eqref{A3} hold, and \(L(u) \to 0\) as \(u \to \infty;\)
\item \eqref{M2} and \eqref{A1} hold;
\item \eqref{M2} and \eqref{A2} hold, and \(k_n \gtrsim e^{\lambda M_n ^{\tau}};\)
\item \eqref{M2} and \eqref{A3} hold; 
\item \eqref{M3} and \eqref{A1} hold;
\item \eqref{M3} and \eqref{A3} hold, and \(L(u) \to 0\) as \(u \to \infty;\)
\item \eqref{M3} and \eqref{A3} hold,  \(L(u) \to \tilde{c} \in (0,\infty]\) as \(u \to \infty,\) and $\lambda^{\nicefrac{1}{\tau}}\breve{c} c^{\nicefrac{1}{\alpha}}\in(0, 1)$.
\end{itemize}
\end{enumerate}
In all cases, possible choice of the normalising sequences is  given by~\eqref{const1}. 
\item \underline{Fr{\'e}chet distribution} with parameter \(\alpha\), \(H(x)=e^{-x^{-\alpha}}\), if and only if any of the following conditions is satisfied 
\begin{itemize}
\item \eqref{M1} and \eqref{A2} hold;
\item \eqref{M1} and \eqref{A3} hold, and \(L(u) \to \infty\) as \(u \to \infty;\)
\item \eqref{M3} and \eqref{A2} hold, and \(L(u) \to 0\) as \(u \to \infty.\)
 \end{itemize}
In all cases, one can take \(s_n, c_n\) in the form~\eqref{const2}.
\item \underline{Special cases}:
\begin{enumerate}
\item[(I)] \begin{eqnarray*}
H(x)=
\begin{cases}
0,& x< \lambda^{-\nicefrac{1}{\tau}} (c\tilde{c})^{-\nicefrac{1}{\alpha}},\\
e^{-(1+x^{-\alpha})},& x=\lambda^{-\nicefrac{1}{\tau}} (c\tilde{c})^{-\nicefrac{1}{\alpha}},\\
e^{-x^{-\alpha}},& x>\lambda^{-\nicefrac{1}{\tau}} (c\tilde{c})^{-\nicefrac{1}{\alpha}},
\end{cases}\end{eqnarray*}
provided \eqref{M1} and \eqref{A3} hold, and \(L(u) \to \tilde{c} \in (0, \infty)\);
\item[(II)] 
\[
H(x) = \begin{cases} e^{\tilde{c}\breve{c}^{-\alpha} - x^{-\alpha}},& x\in(0, \breve{c}\tilde{c}^{-\nicefrac{1}{\alpha}}],\\ 1,& x>\breve{c}\tilde{c}^{-\nicefrac{1}{\alpha}}, \end{cases}
\]
provided \eqref{M3} and \eqref{A2} hold, $\ell(u)\to\tilde{c} \in (0,\infty)$;
\item[(III)]
\[
H(x)=
 \begin{cases}
0,& x<\lambda^{-\nicefrac{1}{\tau}}(c\tilde{c})^{-\nicefrac{1}{\alpha}},\\
\exp\lb -1+\tilde{c}\breve{c}^{-\alpha} - \lambda^{\nicefrac{\alpha}{\tau}} c\tilde{c} \rb,& x=\lambda^{-\nicefrac{1}{\tau}}(c\tilde{c})^{-\nicefrac{1}{\alpha}},\\
\exp\lb \tilde{c}\breve{c}^{-\alpha}-x^{-\alpha} \rb,& x\in(\lambda^{-\nicefrac{1}{\tau}}(c\tilde{c})^{-\nicefrac{1}{\alpha}}, \breve{c}\tilde{c}^{-\nicefrac{1}{\alpha}}],\\
1,& x> \breve{c}\tilde{c}^{-\nicefrac{1}{\alpha}},\end{cases}\]
provided \eqref{M3} and \eqref{A3} hold,  \(L(u) \to \tilde{c} \in (0,\infty)\) as \(u \to \infty,\) and $\lambda^{\nicefrac{1}{\tau}}\breve{c} c^{\nicefrac{1}{\alpha}}>1$;
\item[(IV)]  
\[
H(x) = \begin{cases}
0,& x<\breve{c}\tilde{c}^{-\nicefrac{1}{\alpha}},\\
e^{-1},& x=\breve{c}\tilde{c}^{-\nicefrac{1}{\alpha}},\\
1,& x> \breve{c}\tilde{c}^{-\nicefrac{1}{\alpha}},\end{cases}
\]
provided \eqref{M3} and \eqref{A3} hold,  \(L(u) \to \tilde{c} \in (0,\infty)\) as \(u \to \infty,\) and $\lambda^{\nicefrac{1}{\tau}}\breve{c} c^{\nicefrac{1}{\alpha}}=1$.
\end{enumerate}
In all cases the normalising sequences can be chosen as in~\eqref{const2}.
\end{enumerate}
 \underline{The limit distribution is degenerate for any sequences \(s_n\) and \(c_n\)} in the following three cases:
\begin{itemize}
\item \eqref{M2} and \eqref{A2} hold, and the condition \(k_n \gtrsim e^{\lambda M_n ^{\tau}}\) is not fulfilled; 
\item \eqref{M3} and \eqref{A2} hold, and \(L(u) \to \infty\) as \(u \to \infty\);
\item \eqref{M3} and \eqref{A3} hold,  \(L(u) \to \infty\) as \(u \to \infty,\) and $\lambda^{\nicefrac{1}{\tau}}\breve{c} c^{\nicefrac{1}{\alpha}} \geq 1$.
\end{itemize} 
Moreover, in these cases the distribution of 
\begin{eqnarray*}
\lim\limits_{n\to\infty} \P\lb\max\limits_{j=1,\ldots, k_n} X_{nj} \leq v_n(x) \rb 
\end{eqnarray*}
is degenerate for any increasing sequence \(v_n(x),\) which is unbounded in \(n\) and \(x.\)

\end{theorem}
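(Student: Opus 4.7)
The proof proceeds by contradiction. Suppose $v_n(x)$ is an increasing sequence, unbounded in both $n$ and $x$, such that $H(x) := \lim_{n\to\infty}\P\{\max_{j=1,\ldots,k_n}X_{nj}\leq v_n(x)\}$ exists and is non-degenerate. Writing $F_n := F(\,\cdot\,;\eps_n,M_n,\vec\theta)$, the asymptotic identity $\log F_n^{k_n} = -k_n(1-F_n) + o(k_n(1-F_n))$ (valid whenever $1-F_n\to 0$) converts non-degeneracy of $H$ into the requirement that $h(x) := \lim_{n\to\infty} k_n(1-F_n(v_n(x)))$ be finite, non-increasing, and non-constant on a non-trivial set of $x$. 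I decompose
\[
1 - F_n(v) = (1-\eps_n)\,e^{-\lambda v^{\tau}} + \eps_n\bigl(1 - \tilde F_2(v)\bigr),
\]
with $1-\tilde F_2(v) = 0$ for $v > M_n$ and $(v^{-\alpha}L(v) - M_n^{-\alpha}L(M_n))/F_2(M_n)$ for $m \leq v \leq M_n$. Because the truncated-Pareto contribution switches off abruptly at $v=M_n$, any non-degenerate limit must arise from a delicate balance between the two summands across this threshold.

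I partition the behaviour of $v_n(x)$ into three regimes: (a) $v_n > M_n$ eventually; (b) $v_n \leq (1-\eta)M_n$ for some fixed $\eta>0$; and (c) the critical band $v_n/M_n\to 1$. For Case~1 (\eqref{M2}+\eqref{A2}+``$k_n\gtrsim e^{\lambda M_n^{\tau}}$ fails''), \eqref{M2} combined with Potter's bounds for $L$ gives $k_n\eps_n M_n^{-\alpha}L(M_n)\to\infty$, so in regime (b) the truncated term diverges and $H\equiv 0$. In regime (a), $k_n(1-\eps_n)e^{-\lambda v_n^\tau}\leq k_n e^{-\lambda M_n^\tau}$, and the failure of $k_n\gtrsim e^{\lambda M_n^\tau}$ ensures this is bounded and does not admit a full non-degenerate profile in $x$ compatible with $H\equiv 0$ below $M_n$. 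For Cases~2 and~3 (\eqref{M3} paired with \eqref{A2} or \eqref{A3}, together with $L(u)\to\infty$ and, in Case~3, $\lambda^{\nicefrac{1}{\tau}}\breve c c^{\nicefrac{1}{\alpha}}\geq 1$), I verify $k_n e^{-\lambda M_n^\tau}\to 0$ (from $\log k_n=o(M_n^\tau)$ under \eqref{A2}, or $k_n^{1-A}\to 0$ with $A=\lambda\breve c^\tau c^{\tau/\alpha}>1$ under \eqref{A3}), so $H\equiv 1$ in regime (a); under \eqref{M3} and $L\to\infty$ one has $k_n\eps_n M_n^{-\alpha}L(M_n)=\breve c^{-\alpha}L(M_n)\to\infty$, yielding $H\equiv 0$ in regime (b).

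The remaining regime (c), $v_n = M_n(1-\delta_n)$ with $\delta_n\to 0$, is the crux. I employ the expansion
\[
v_n^{-\alpha}L(v_n) - M_n^{-\alpha}L(M_n) = M_n^{-\alpha}L(M_n)\bigl[\alpha\delta_n + (L(v_n)/L(M_n)-1) + O(\delta_n^2)\bigr],
\]
and control $|L(v_n)/L(M_n)-1|$ via Potter's theorem. In each degenerate case, substituting into both summands of $1-F_n(v_n)$ and exploiting the case-specific scaling (together with the exact matching of the Weibull and truncated terms at $\lambda^{\nicefrac{1}{\tau}}\breve c c^{\nicefrac{1}{\alpha}}=1$ in Case~3), one argues that a finite $h(x)$ inside the band would force $\delta_n$ to satisfy a balance equation leaving no room for non-trivial $x$-dependence; hence the limit collapses onto a single transition point and $H$ is a point mass.

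The main obstacle is precisely this last step: ruling out a finely tuned $v_n(x)$ that exploits the interplay between $\alpha\delta_n$ and $L(v_n)/L(M_n)-1$ in the expansion above to produce a non-constant $h(x)$ while keeping the Weibull contribution negligible. The growth hypothesis $L(u)\to\infty$, combined with the algebraic condition $\lambda^{\nicefrac{1}{\tau}}\breve c c^{\nicefrac{1}{\alpha}}\geq 1$ in Case~3, is the precise mechanism that prevents such a balance, but translating this into a rigorous exclusion requires Potter-type estimates that are uniform in $x$, as well as careful bookkeeping of the cross-term between the slow variation of $L$ and the algebraic factor $v_n^{-\alpha}$.
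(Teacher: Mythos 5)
There is a genuine gap: your argument addresses only the final, negative part of Theorem~\ref{thm2} (the three cases in which no non-degenerate limit exists under any normalisation). The bulk of the theorem --- the ``if and only if'' classification of the non-degenerate limits, i.e.\ the Gumbel cases with normalisation~\eqref{const1}, the Fr\'echet cases with normalisation~\eqref{const2}, and the four special distributions I--IV with their explicit atoms and thresholds --- is never touched. The paper's proof spends almost all of its effort there: it writes the limit as $\exp\{-\lim_n (k_n e^{-\lambda v_n^{\tau}(x)} - k_n\eps_n(M_n^{-\alpha}L(M_n)-v_n^{-\alpha}(x)L(v_n(x)))\1_{\{v_n(x)\in[m,M_n]\}}+k_n\eps_n\1_{\{v_n(x)<m\}})\}$ and evaluates it case by case under \eqref{M1}--\eqref{M3} and \eqref{A1}--\eqref{A3}, checking in each case whether $v_n(x)$ lands above or below $M_n$ and computing the two competing terms; in particular the derivation of Distributions I--IV and of the boundary condition $\lambda^{\nicefrac{1}{\tau}}\breve{c}c^{\nicefrac{1}{\alpha}}\lessgtr 1$ comes entirely from this computation. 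A proof of Theorem~\ref{thm2} that omits all of this is not a proof of the theorem.

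Moreover, even the degeneracy part you do attack is not established. You correctly identify the delicate regime $v_n/M_n\to 1$, but you explicitly leave it as ``the main obstacle'': the claim that a finite limit $h(x)$ in that band ``leaves no room for non-trivial $x$-dependence'' is asserted, not proved, and it is exactly the point where a proof by contradiction must do work (the paper instead argues that $k_n\eps_n M_n^{-\alpha}L(M_n)\to\infty$ forces $v_n(x)>M_n$ for a non-degenerate limit, and then derives the necessary condition \eqref{MM}/\eqref{Mn2} on the Weibull term, whose failure kills the limit). Two further slips: in your Case~1, the failure of $k_n\gtrsim e^{\lambda M_n^{\tau}}$ (which in the paper's notation means $k_n e^{-\lambda M_n^{\tau}}\not\to\infty$) does \emph{not} imply that $k_n e^{-\lambda M_n^{\tau}}$ is bounded --- it may oscillate, so a subsequence argument is needed; and in your Case~3 under \eqref{A3} you would need $k_n^{\,1-A}\to 0$ with $A=\lambda\breve{c}^{\tau}c^{\nicefrac{\tau}{\alpha}}$, which handles $A>1$ but not the boundary case $\lambda^{\nicefrac{1}{\tau}}\breve{c}c^{\nicefrac{1}{\alpha}}=1$ that the theorem also covers (there the relevant comparison is with the exact constant in \eqref{MM}, not a crude power bound). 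As it stands, the proposal neither proves the positive classification nor completes the negative one.
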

\begin{proof}
The proof is given in Appendix~\ref{app4}. 
\end{proof}
\section{Simulation study} \label{ss}

\begin{table}[t]
\caption{The values of \(\eps_n\) and  \(M_n\) chosen for the numerical study}
\label{simulation}
\setlength{\tabcolsep}{28pt}
\begin{tabular}{c|c|c}
\hline
 & \eqref{A1} & \eqref{A2}\\ \hline
\multirow{2}{*}{\eqref{M1}} & \(\eps_n = n^{-1} \log n, \) & \(\eps_n = n^{-1} (\log n)^2, \) \\ 
& \(M_n = \log (n+1)\) & \(M_n = (\log (n+1))^2\) \\ \hline
\multirow{2}{*}{\eqref{M2}} & \(\eps_n = n^{-1} \log n, \) & \(\eps_n = n^{-1} (\log n)^2,\) \\ 
& \(M_n = \sqrt{\log (n+1)}\) & \(M_n = \sqrt{\log (n+1)}\) \\\hline
\end{tabular}
\end{table}

\begin{figure}[t]
	\centering
	\includegraphics[width=1\linewidth]{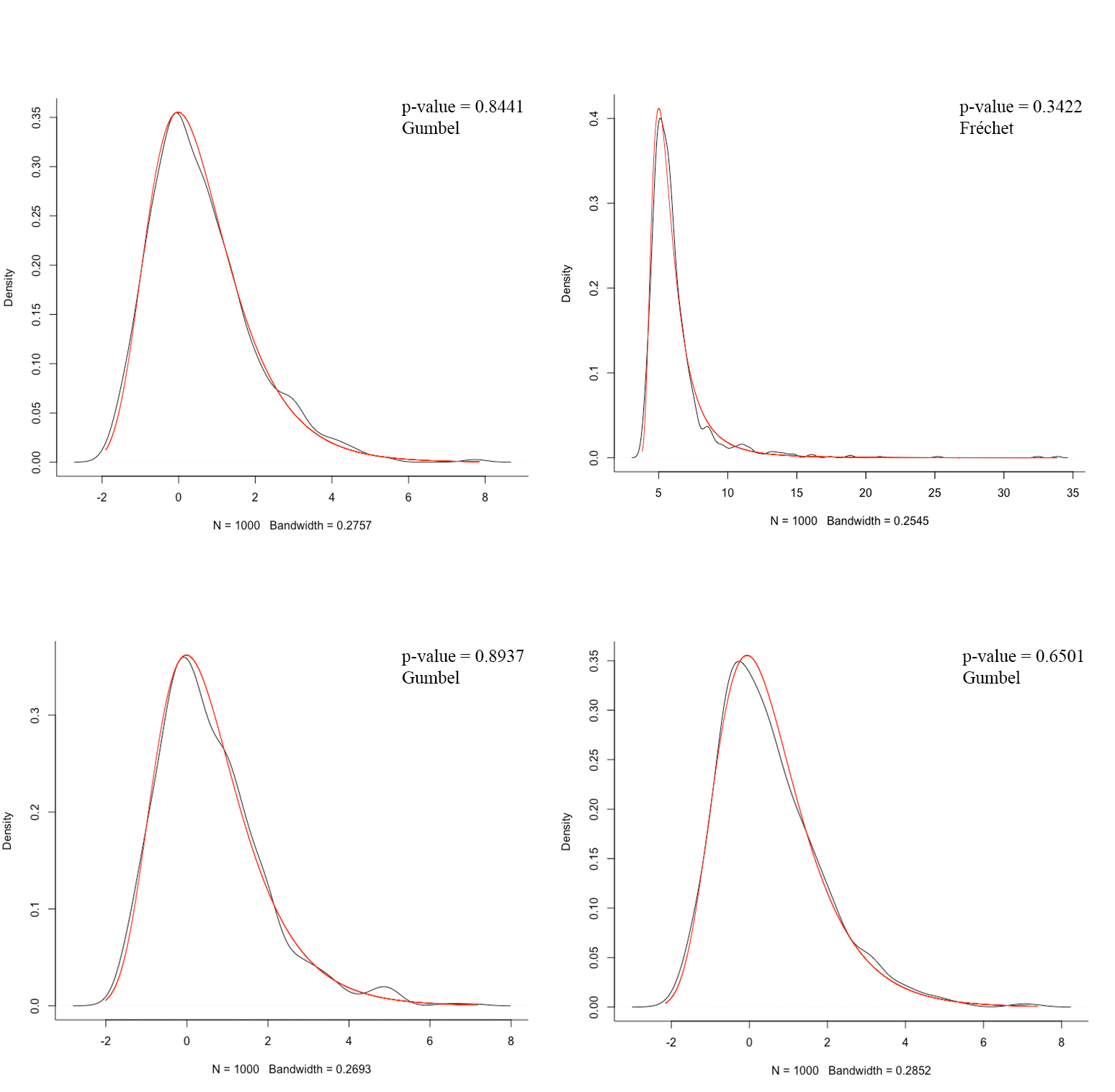}
	\caption{Densities of sample maxima for groups \eqref{A1}-\eqref{M1} (top left), \eqref{A2}-\eqref{M1} (top right), \eqref{A1}-\eqref{M2} (bottom left) and \eqref{A2}-\eqref{M2} (bottom right) superimposed with the theoretical limit distributions suggested by Theorem~\ref{thm2}
	}
	\label{fig:simstudy}
\end{figure}

The aim of the current section is to illustrate the dependence of limit distribution for maxima in the model~\eqref{triang2} on the rates of the mixing parameter \(\eps_n\) and of the truncation level \(M_n\). For this purpose we consider four triangular arrays~\eqref{triang2} with \(k_n = n\) having all permanent  parameters the same, namely, \(\vec{\theta}=(1, 1, 1.5, 1)\) and \(m=0.1\). The sequences \(\eps_n\) and \(M_n\) are chosen to satisfy the following pairs of conditions: \eqref{A1}-\eqref{M1}, \eqref{A2}-\eqref{M1}, \eqref{A1}-\eqref{M2} and \eqref{A2}-\eqref{M2}. The exact form of mixing and truncation parameters are presented in Table~\ref{simulation}. 

As previously, the primary separation is made due to the rates of \(\log k_n\) and \(k_n \eps_n\): we fix \(\eps_n = n^{-1} (\log n)\) and \(\eps_n = n^{-1} (\log n)^2\), which imply conditions~\eqref{A1} and~\eqref{A2}, respectively. Next, the models are divided according to the rate of growth of \(M_n\) in the form \(M_n = (\log (n+1))^a\) with \(a=1/2,1,2.\)  Recall that from Theorem~\ref{thm2}, it follows that the limit distribution is Gumbel for the pairs  \eqref{A1}-\eqref{M1}, \eqref{A1}-\eqref{M2} and \eqref{A2}-\eqref{M2} (note that for the last two cases \(k_n \gtrsim e^{\lambda M_n ^{\tau}}\) under our choice), and Fr{\'e}chet for the pair \eqref{A2}-\eqref{M1}.

For each case we simulate 1000 samples of length 1000, and find the maximal value of each sample. The goodness-of-fit of the limit distributions of the maximal values suggested by Theorem~\ref{thm2} is  tested by the Kolmogorov-Smirnov criterion.   Figure~\ref{fig:simstudy} depicts the kernel density estimates of the densities of normalised maxima in each case superimposed with the limit distributions implied by Theorem~\ref{thm2}. It can be seen that for all groups the density estimates are  quite close to the theoretical densities, and the Kolmogorov-Smirnov test does not reject the null of the corresponding theoretical distributions (corresponding p-values are given on the same figure). 

\section{Modelling the log-returns of BMW shares}
\label{BMW}
\begin{figure}[t]
	\centering
	\includegraphics[width=1\linewidth]{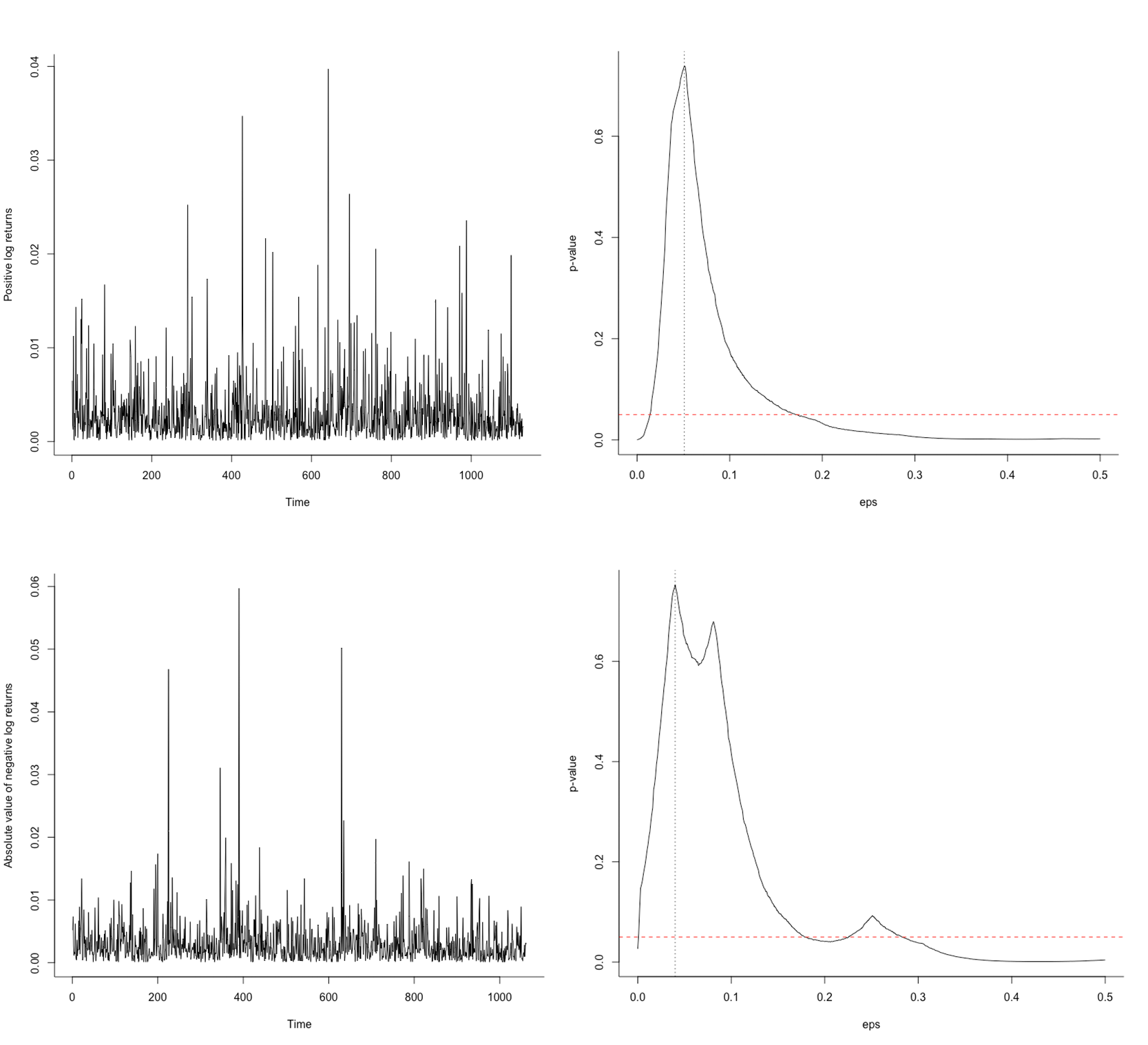}
	\caption{First (second) row: the plot of data; p-values for Weibull fit for positive (absolute negative) log returns
	}
	\label{fig:returns}
\end{figure}

Starting from the prominent paper by Mandelbrot \cite{Man}, heavy-tailedness of distributions of price changes is a well-known stylised fact, leading to the frequent choice of power laws for the modelling, see, e.g., \cite{Cont}. However, numerous papers admit that the tails of the distributions used for modelling the returns is though heavier than normal, yet lighter than of a power law. For instance, Laherr\`ere and Sornette  \cite{LS} demonstrate that daily price variations on the exchange market can be successfully described by the Weibull distribution with parameter smaller than one. Malevergne et al. \cite{MPS} intently analyse financial returns on different time scales, ranging from daily to 5- and 1-minute data, and come to the conclusion that the Pareto distribution fits the highest 5\% of the data, while the remaining 95\% are most efficiently described by the Weibull law. Thus, it is reasonable to expect that the overall distribution of returns should be successfully described by the model which (in some sense) lies in between of these two distributions. This idea serves as a motivation of the application of  the model~\eqref{mixture} to modelling the log-returns.

In our study we consider hourly logarithmic returns of BMW shares in 2019. 
Following~ \cite{MPS}, we analyse positive and negative returns separately.  The sample sizes are equal to 1130 and 1062, respectively. The plots for positive and negative log returns are presented in the first plot in Figure~\ref{fig:returns}. In what follows, we assume that the log-returns are jointly independent. This assumption was checked by  the chi-squared test resulting in p-values 0.234 and 0.223 for positive and negative returns, respectively.




The analysis consists of 4 steps. Below we denote the positive log-returns by \(X_1,..., X_{1130}\) and the negative log-returns by \(Y_1,..., Y_{1062}\).
\begin{enumerate}
\item
\begin{figure}[t]
	\includegraphics[width=1\linewidth]{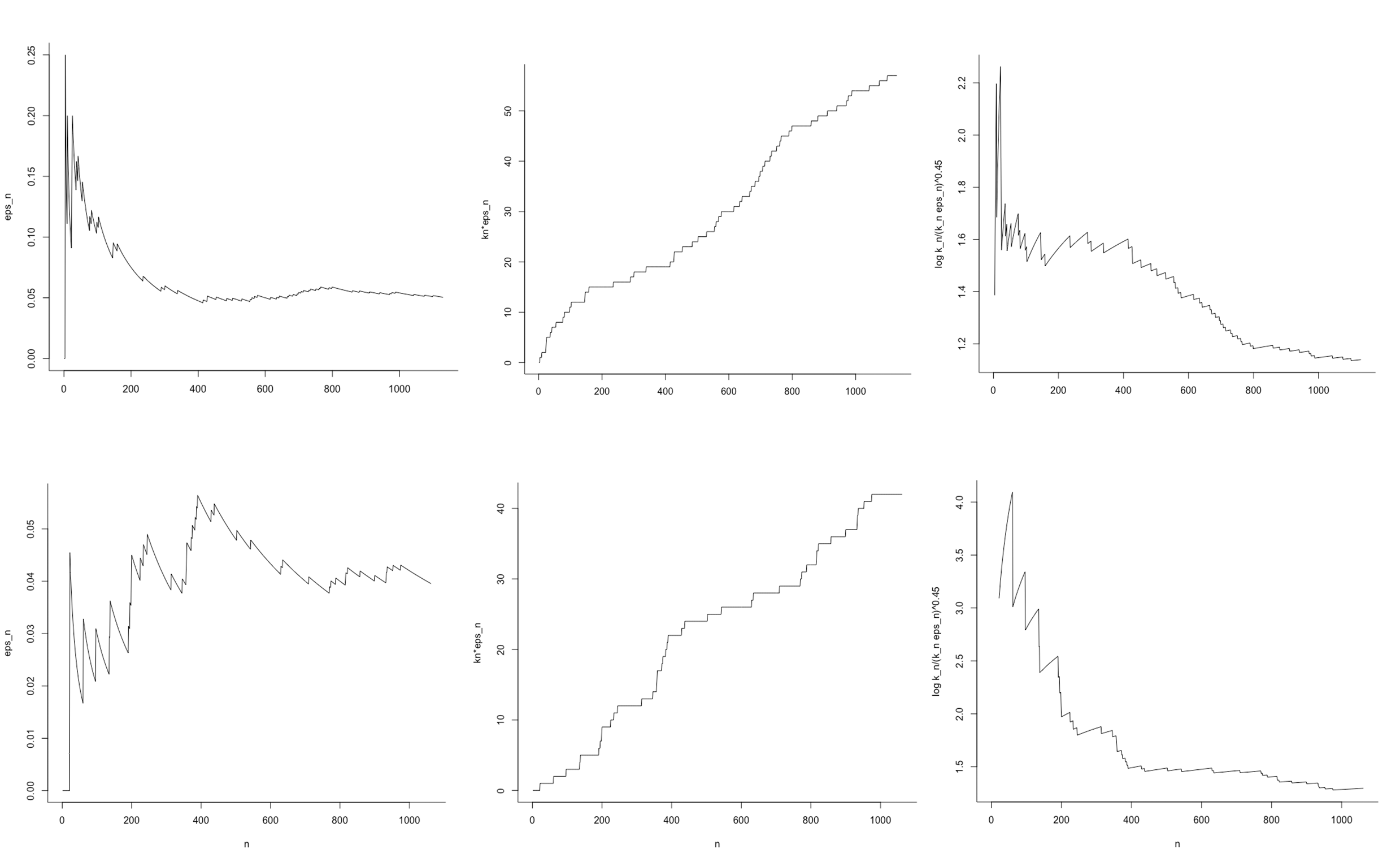}
	\caption{$\hat{\eps}_n$ against $n$ (left), $k_n \hat{\eps}_n$ against $n$ (middle), $\log k_n (k_n \eps_n)^{-0.45}$ against $n$ (right) for positive log returns of BMW (first row) and absolute values of negative log returns of BMW (second row)}
	\label{fig:max_BMW}
\end{figure}
\textbf{Separation of components.} For each \(n=1...1130\), the sample \(X_1,..., X_{n}\) is divided into 2 parts corresponding to the first and the second components in~\eqref{mixture2}, where the slowly varying function \(L\) is equal to a constant. For such partition we assign all observations except the \(\lfloor 1130\cdot\eps_{1130} \rfloor\) greatest order statistics of the whole sample to the first component and test the goodness-of-fit for Weibull distribution by the Kolmogorov-Smirnov criterion. The values of \(\eps_{1130}\) are taken on a grid from 0 to 0.5 with a step of 0.001. From the second plot in Figure~\ref{fig:returns} it can be seen that there is an evident peak in p-values. The corresponding value \(\hat{\eps}_{1130}\) is considered to be an estimate of the mixing parameter for \(n=1130\), and the \(\lfloor 1130\cdot\hat{\eps}_{1130} \rfloor\) upper order statistics are assumed to come from the heavy-tailed part. For all \(n=1...1129\) the parameter \(\eps_n\) is then estimated as the proportion of elements corresponding to the second component. The same procedure is applied for each \(n=1..1062\) to the sample \(Y_1,..., Y_{n}\). 

The results are illustrated by~Figure~\ref{fig:max_BMW}. The first plot in two rows indicates that in both cases $\eps_n$ declines with $n$, though in case of negative log returns the decrease is not so evident. From the second plot one can see that $k_n \eps_n$ with $k_n=n$ appear to tend to infinity. Therefore, as suggested by~Theorems~\ref{thm1} and~\ref{thm2}, we examine the asymptotic behaviour of the ratio $\log k_n/ (k_n \eps_n)^{\beta}$ for different values of $\beta>0$. For both positive and absolute values of negative log returns we get that for $\beta=0.45$ this ratio decreases rapidly. \newline

\item \textbf{Model selection.} Based on the partition obtained on the previous step, the decision between truncated and non-truncated distributions for the second component is made based on the test proposed by Aban~\cite{Aban}. From Figure~\ref{fig:aban} one observes that the null hypothesis of non-truncated law is not rejected for both positive and negative log returns since the p-values are significantly larger than 0.05. Thus, it can be concluded that the model~\eqref{mixture} is more appropriate for the considered data. The Kolmogorov-Smirnov test does not reject the null of Pareto distribution for the observations assigned to the second component with p-values 0.971 and 0.925 for positive and negative log returns, respectively. It should be noted that in both cases the Pareto distribution does not fit the whole sample, since the p-values are smaller than \(10^{-16}\). \newline

\begin{figure}[t]
	\centering
	\includegraphics[width=1\linewidth]{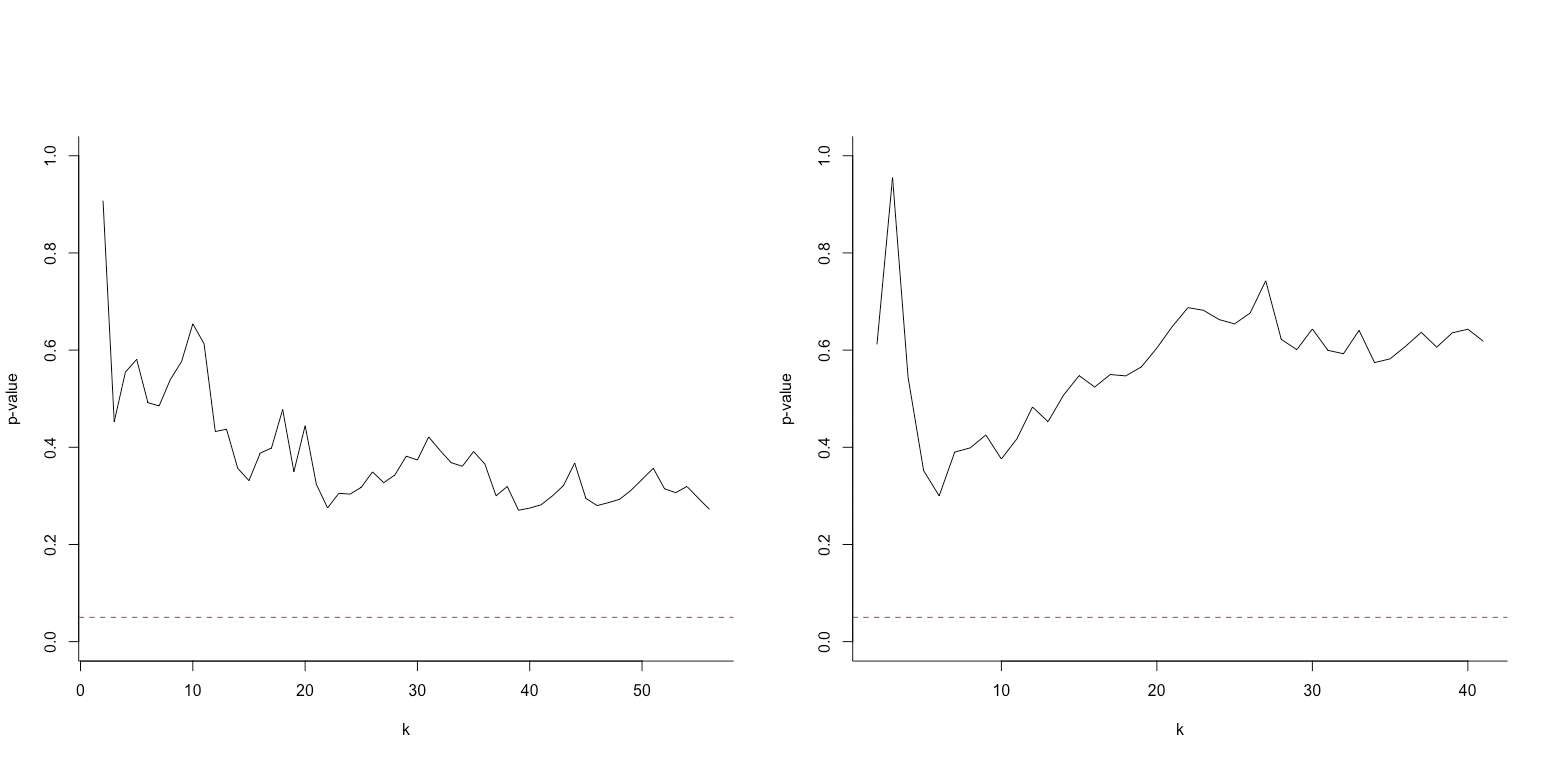}
	\caption{The p-values of the Aban's test for positive (left) and absolute values of negative (right) log returns
	}
	\label{fig:aban}
\end{figure}

\item \textbf{Estimation of parameters.} The parameters of the first and second components are estimated by the maximum-likelihood approach. The estimated values are presented in Table~\ref{est_BMW}. Since $\hat{\tau}/ \alpha$ is equal to 0.483 for positive log returns and 0.484 for absolute values of negative, we conclude that the assumption~\eqref{A2} is fulfilled with \(\beta=0.45\), and therefore the limit distribution for maxima is the Fr\'echet distribution, see item (ii) in Theorem~\ref{thm1}.

 It is worth mentioning that for both positive and absolute values of negative log returns we get $\hat{\alpha}>2$, which is completely coherent with general empirical results for financial returns and addresses the common critique against models with infinite variance, see~\cite{Cont}.\newline

\begin{table}[t]
\caption{Estimated values of the parameters of mixture distribution for positive and absolute values of negative log returns of BMW, 2019}\label{est_BMW}
\setlength{\tabcolsep}{10.5pt}
	\begin{tabular}{c|*{6}{c}}
	\hline\noalign{\smallskip}
		Estimates &&   $\hat{\lambda}$  & $\hat{\tau}$  & $\hat{\alpha}$ & $\hat{m}$ & $\hat{\eps}$   \\ \hline 
		Positive log returns && 0.003 & 1.278 & 2.649 & 0.009 & 0.051   \\ 
		Abs. negative log returns && 0.003 & 1.246 & 2.573 & 0.01 & 0.04 \\
		\noalign{\smallskip}\hline
	\end{tabular}
\end{table}
\begin{figure}[t]
	\includegraphics[width=1\linewidth]{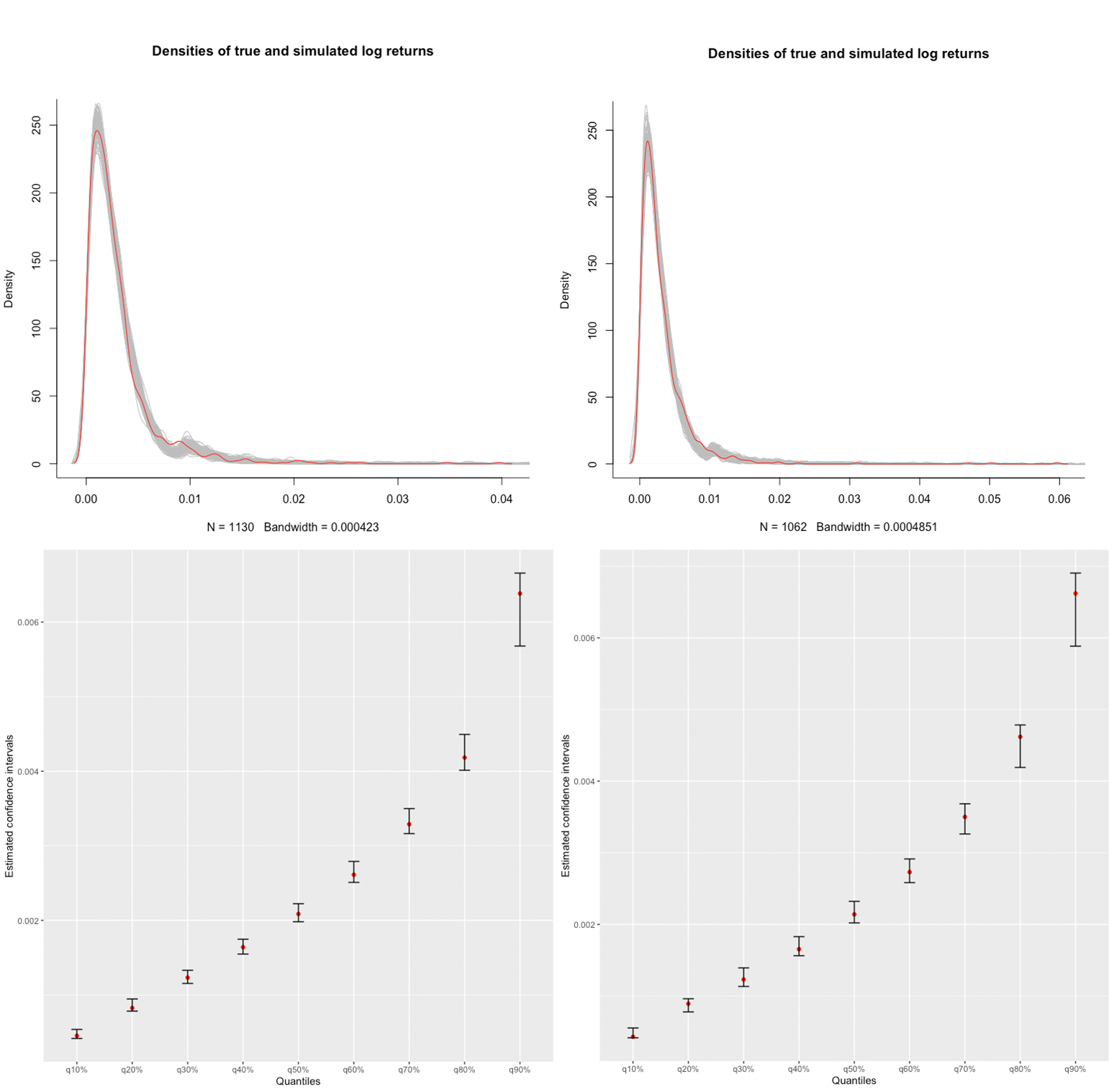}
	\caption{Top left (right): real (red) and simulated (grey) density of positive (negative) log returns; bottom left (right): empirical quantiles of positive (negative) log returns and the corresponding confidence intervals}
	\label{fig:BMWfit}
\end{figure}
\begin{table}[t]
\caption{Empirical quantiles of positive log returns of BMW, 2019, and the estimated confidence intervals}
\label{table_quant1}
\setlength{\tabcolsep}{5.5pt}
	\begin{tabular}{c|*{9}{c}}\hline  
		Quantiles\(\cdot 10^3\) &   10\%  & 20\%  & 30\% & 40\% & 50\% & 60\% & 70\% & 80\% & 90\%   \\ \hline 
		Lower CI & 0.416 & 0.783 & 1.154 & 1.548 & 1.982 & 2.509 & 3.164 & 4.013 & 5.679  \\ 
		Estimate & 0.452 & 0.825 & 1.232 & 1.64 & 2.085 & 2.611 & 3.289 & 4.183 & 6.383 \\ 
		Upper CI & 0.537 & 0.946 & 1.33 & 1.747 & 2.223 & 2.79 & 3.498 & 4.493 & 6.657 \\ \hline
	\end{tabular}
\end{table}
\begin{table}[t]
\caption{Empirical quantiles of absolute values of negative log returns of BMW, 2019, and the estimated confidence intervals}
\label{table_quant2}
\setlength{\tabcolsep}{5.5pt}
	\begin{tabular}{c|*{9}{c}}\hline  
		Quantiles \(\cdot 10^3\) &   10\%  & 20\%  & 30\% & 40\% & 50\% & 60\% & 70\% & 80\% & 90\%   \\ \hline 
		Lower CI & 0.417 & 0.778 & 1.134 & 1.562 & 2.021 & 2.583 & 3.262 & 4.191 & 5.884  \\ 
		Estimate & 0.431 & 0.892 & 1.231 & 1.654 & 2.14 & 2.73 & 3.501 & 4.619 & 6.62 \\ 
		Upper CI & 0.554 & 0.962 & 1.393 & 1.829 & 2.321 & 2.914 & 3.684 & 4.784 & 6.905 \\ \hline
	\end{tabular}
\end{table}
\item \textbf{Validation of the model.}
Figure~\ref{fig:BMWfit} depicts the true density of positive (top left) and absolute values of negative (top right) log returns superimposed with densities of 100 simulations from the mixture~\eqref{mixture} with the corresponding parameter estimates. The constructed model is also verified by the empirical confidence intervals for the sample quantiles based on 100 simulations. The results are given in Tables~\ref{table_quant1} and~\ref{table_quant2} and illustrated by Figure~\ref{fig:BMWfit}. These intervals are reasonably small and contain all  true values of quantiles. Next, from the Kolmogorov-Smirnov test we conclude that the null of distribution~\eqref{mixture} is not rejected with p-values 0.769 and 0.775 for positive and negative log returns, respectively.  Finally, we arrive at the outcome that the model~\eqref{mixture} is appropriate both for positive and absolute values of negative log returns of BMW at the considered time scale.


\end{enumerate}

\section{Conclusion}
This paper contributes to the existing literature in the following  respects.
\begin{enumerate}
\item We model the heavy-tailed impurity via the mixture of distribution with varying parameters and (following the ideas from~\cite{BIG}, \cite{CS}) consider the resulting model as a triangular array. The notion of heavy-tailed impurity is not new, but all previously known probabilistic results are concentrated only on the classical limit laws, see~\cite{gm}, \cite{Panov2017}. In this paper, we establish the limit laws for the maximum in these models. 
\item The paper delivers an example of the triangular array  such that its raw-wise maximum has (under proper normalisation) 6 different distributions, depending on the rates of the varying parameters.  To the best of our knowledge,  all previous articles on the extreme value analysis for  triangular arrays deal with the  convergence to the limit law  with twice differentiable cdf (\cite{ACH}, \cite{DEN}, \cite{FH}), while some limit distributions in our example are discontinuous.
\item We show the difference between various types of truncation for the regularly varying distributions used for modelling the impurity. Our conditions \eqref{M1}-\eqref{M2} are close to soft and hard  truncation regimes introduced in~\cite{CS}, leading to similar (but not completely the same) outcomes for our mixture model as for the model considered in \cite{CS}. Moreover, unlike previous papers, we study in details the case of intermediate truncation regime~\eqref{M3}.
\item For practical purposes we describe the four-step scheme for the application of this model to the asset price modelling.  This approach can be considered as a possible  development of the idea that the distribution 
of stock returns is in some sense \textit{between exponential and power law}. The comprehensive discussion of this idea can be found in~\cite{MPS}.
\end{enumerate}

\appendix
\section{Classical EVA for the mixture model} \label{app1}
Let us analyse the asymptotic behaviour of maxima of a sequence of i.i.d.\ random variables $X_1, X_2, \dots, X_n$, $n\geq 2$, with cumulative distribution function~\eqref{mixture}. That is, we consider
\begin{equation*}
\lim\limits_{n\to\infty} \P\lb \max\limits_{i=1,\ldots, n} X_i \leq v_n (x) \rb,
\end{equation*}
where $v_n (x)=s_n x +c_n$ is some non-decreasing normalising sequence unbounded in $n$ and $x$. Since \(X_i, i=1..n\) are independent,
\begin{eqnarray*}
\lim\limits_{n\to\infty} \P \lb  \max\limits_{i=1,\ldots, n} X_i \leq v_n (x)\rb &=& \lim\limits_{n\to\infty} \left(\P \lb X_1 \leq v_n (x)\rb\right)^n \\ &=& \lim\limits_{n\to\infty} \left((1-\eps)(1-e^{-\lambda v_n ^{\tau} (x)})+\eps(1-v_n ^{-\alpha} (x) \ell(v_n (x)))\right)^n \\ &=& \exp\lb -\lim\limits_{n\to\infty} n \left((1-\eps)e^{-\lambda v_n ^{\tau} (x)}+\eps v_n ^{-\alpha} (x) \ell(v_n (x))\right)\rb \\ &=& \exp\lb-\lim\limits_{n\to\infty} n\eps v_n ^{-\alpha} (x) \ell(v_n (x)) \right.\\&&\hspace{2cm}\left. \times\left(\frac{1-\eps}{\eps} \cdot\frac{v_n ^{\alpha} (x)}{e^{\lambda (v_n(x))^{\tau}} \ell(v_n (x))} + 1\right)\rb.
\end{eqnarray*}
Since $v_n (x)\to\infty$ as $n\to\infty$, $\lim\limits_{n\to\infty} v_n ^{\alpha} (x) \left(e^{\lambda (v_n(x))^{\tau}} \ell(v_n (x))\right)^{-1} =0$, and therefore
\begin{equation*}
\lim\limits_{n\to\infty} \P \lb  \max\limits_{i=1,\ldots, n} X_i \leq v_n (x)\rb = \exp\lb -\lim\limits_{n\to\infty} n\eps v_n ^{-\alpha} (x) \ell(v_n (x)) \rb.
\end{equation*}
Thus, the limit distribution for maxima is determined by the second component, leading to the Fr\'echet limit. In fact, choosing
\begin{eqnarray*}
c_n=0, \qquad s_n = (1/\bar{F_2})^{\leftarrow} (n), 
\end{eqnarray*}
we get 
 \begin{equation*}
\lim\limits_{n\to\infty} \P \lb  \max\limits_{i=1,\ldots, n} X_i \leq v_n (x)\rb = \exp\lb -\eps x^{-\alpha} \rb,
\end{equation*}
that is the  Fr{\'e}chet-type distribution.
\section{Proof of Theorem~\ref{thm1}}
\label{app2}
For given sequences \(s_n, c_n\), the left-hand side of~\eqref{state} can be represented as 
\begin{eqnarray}
\nonumber
\lim\limits_{n\to\infty} \left(\P\lb X_1 \leq v_n (x) \rb\right)^{k_n}
&=&\lim\limits_{n\to\infty} \left(\bigl(1-\eps_n\bigr)\bigl(1-e^{-\lambda v_n ^{\tau} (x)}\bigr) + \eps_n \bigl(1-v_n ^{-\alpha} (x) \ell(v_n (x))\bigr)\right)^{k_n}\\
\nonumber
&=&\lim\limits_{n\to\infty} (1 - e^{-\lambda v_n ^{\tau} (x)} - \eps_n v_n ^{-\alpha} (x) \ell(v_n (x)))^{k_n}\\
\label{limm}
&=& \exp\Bigl\{-\lim_{n \to \infty} \Bigl( 
k_n  e^{-\lambda v_n ^{\tau} (x)} + k_n \eps_n v_n ^{-\alpha} (x) \ell(v_n (x))
\Bigr) \Bigr\},
\end{eqnarray}
where \(v_n(x) = s_n x +c_n\). Our aim is to find the sequences \(s_n, c_n\) guarantying that this limit (denoted by \(H(x)\)) is non-degenerate. We divide the range of possible rates of convergence of \(k_n, \eps_n\) into several essentially different cases.
\begin{itemize}
\item[(\textit{i})] Let $k_n\eps_n\to 0$ as $n\to\infty$ or $k_n \eps_n = const>0$. As $v_n ^{-\alpha} (x) \ell(v_n (x))\to 0$ as $n\to\infty$ by the slow variation of $\ell(\cdot)$ (see~\eqref{prop1}), we get 
\begin{eqnarray*}
H(x) = \exp\Bigl\{-\lim_{n \to \infty} 
 k_n \eps_n v_n ^{-\alpha} (x) \ell(v_n (x))
 \Bigr\},
\end{eqnarray*}
and therefore we deal with the extreme value analysis of the Weibull law. Since $\bar{F} _1 (x) = e^{-\lambda x^{\tau}}, \lambda >0, \tau\geq 1$ is a von Mises function, i.e., $\bar{F} _1 (x)$ can be represented as
\begin{equation*}
\bar{F} _1 (x) = \breve{c}\cdot\exp\lb-\int\limits_{0} ^x \frac{1}{a(u)} \,du\rb, \quad 0 < x < \infty, 
\end{equation*}
with $\breve{c}=1$ and $a(u) = (\lambda\tau)^{-1} u^{1-\tau}, x>0$,  we get that the limit distribution is Gumbel under the choice  $v_n (x) = s_n x + c_n$ with
\begin{equation*}
c_n := F_1 ^{\leftarrow} \left(1 - \frac{1}{k_n}\right) = \left(\frac{\log k_n}{\lambda}\right)^{\nicefrac{1}{\tau}}
\end{equation*}
\begin{equation*}
s_n := a(c_n) = (\lambda\tau)^{-1} \left(\frac{\log k_n}{\lambda}\right)^{{\nicefrac{1}{\tau}-1}}.
\end{equation*}

\item[(\textit{ii})] Let $k_n\eps_n\to \infty$ as $n\to\infty$. This case is divided into several subcases, depending on the relation between \(s_n\) and \(c_n.\)
\item[1.] First, let us consider $s_n \gtrsim |c_n|$. Then 
\begin{equation*}
v_n (x) = s_n x + c_n = s_n x\left(1+\frac{c_n}{s_n x}\right) = s_n x (1+\bar{o}(1))
\end{equation*}
as $n\to\infty$. Therefore, 
\begin{equation*}H(x)= \exp\Bigl\{-\lim_{n \to \infty} \Bigl(k_n e^{-\lambda s_n ^{\tau} x^{\tau}} + k_n \eps_n s_n ^{-\alpha} x^{-\alpha} \ell (s_n)\Bigr)\Bigr\},
\end{equation*}
as $\ell(s_n x)\sim\ell(s_n)$ for all fixed $x\in\R$ as $n\to\infty$. Clearly, since $c_n$ is not present in the above limit, one can take $c_n = 0$. As for $s_n$, we have \begin{equation}
\label{sn1}
s_n ^{-\alpha} \ell(s_n) = \frac{1}{k_n \eps_n},
\end{equation}
i.e.,
\( s_n := F_2 ^{\leftarrow} \left(1 - (k_n \eps_n)^{-1}\right).
\)
We have
\begin{equation*}
\lim\limits_{n\to\infty} k_n e^{-\lambda s_n ^{\tau} x^{\tau}} =\lim\limits_{n\to\infty} \exp\lb -s_n ^{\tau} \left(\lambda x^{\tau} - \frac{\log k_n}{s_n ^{\tau}}\right)\rb,
\end{equation*}
and therefore the limit distribution in \eqref{limm} is non-degenerate (and is actually the Fr{\'e}chet distribution) if and only if
\begin{equation}
\label{cond1}
\lim\limits_{n\to\infty} \frac{\log k_n}{s_n ^{\tau}} = 0.
\end{equation}
It would be a worth mentioning that \(s_n\) depends on the function \(L\) via the equality~\eqref{sn1}. Let us recall that $\ell(\cdot)$ is slowly varying and therefore
\begin{equation*}
\ell(s_n) \gtrsim s_n ^{-\epsilon} \quad \forall\epsilon>0.
\end{equation*}
Thus, from~\eqref{sn1} we get
\begin{equation*}
\frac{1}{k_n \eps_n} = s_n ^{-\alpha} \ell(s_n) \gtrsim s_n ^{-\alpha-\epsilon} \quad \forall\epsilon>0,
\end{equation*}
and 
\begin{equation*}
s_n \gtrsim (k_n \eps_n)^{\nicefrac{1}{(\alpha+\epsilon)}} \quad \forall\epsilon>0.
\end{equation*}
Now, since 
\begin{equation*}
\frac{\log k_n}{s_n ^{\tau}} \lesssim \frac{\log k_n}{(k_n \eps_n)^{\nicefrac{\tau}{(\alpha+\epsilon)}}},
\end{equation*}
we get that for the condition~\eqref{cond1} to be fulfilled, it is sufficient that the right-hand side tends to zero as $n\to\infty$, i.e.,
\begin{equation*}
\lim\limits_{n\to\infty} \frac{\log k_n}{(k_n \eps_n)^{\nicefrac{\tau}{(\alpha+\epsilon)}}} = 0,
\end{equation*}
or, equivalently, 
\begin{equation}
\label{cond1fin}
\exists \beta\in(0,\nicefrac{\tau}{\alpha}): \quad \lim\limits_{n\to\infty} \frac{\log k_n}{(k_n \eps_n)^{\beta}} = 0.
\end{equation}
We conclude that the condition~\eqref{cond1fin} yields~\eqref{cond1}, and in this case the limit distribution is Fr\'echet.

\item[2.] Now, let $s_n>0$ and $c_n\in\R$ be such that $s_n \lesssim |c_n|$. In this case
\begin{equation*}
v_n (x) = s_n x + c_n = c_n \left(\frac{s_n}{c_n} x + 1\right) = c_n(1+\bar{o}(1))
\end{equation*}
as $n\to\infty$. Thus, the limit in~\eqref{limm} takes the form
\begin{equation*}
 H(x)= \exp\Bigl\{-\lim_{n \to \infty} \Bigl( k_n e^{-\lambda (s_n x+c_n) ^{\tau} } + k_n \eps_n c_n ^{-\alpha} \ell (c_n )\Bigr) \Bigr\}.
\end{equation*}

Let the norming constants $c_n$ and $s_n$ be chosen in the form~\eqref{const1}. Then \(H(x)\) is the cdf of the Gumbel law if
\begin{equation}
\label{cond2}
\lim\limits_{n\to\infty} \frac{k_n \eps_n \ell\left((\log k_n)^{\nicefrac{1}{\tau}}\right)}{(\log k_n)^{\nicefrac{\alpha}{\tau}}}=0.
\end{equation}
As previously, we would like to replace~\eqref{cond2} with another condition without $\ell(\cdot)$. Once more, we would like to recall that by slow variation of $\ell(\cdot)$
\begin{equation*}
\ell(x) \lesssim x^{\epsilon} \quad \forall \epsilon>0.
\end{equation*}
From this we conclude that
\begin{equation*}
\frac{k_n \eps_n \ell\left((\log k_n)^{\nicefrac{1}{\tau}}\right)}{(\log k_n)^{\nicefrac{\alpha}{\tau}}} \lesssim \frac{k_n \eps_n}{(\log k_n)^{\nicefrac{(\alpha-\epsilon)}{\tau}}}
\end{equation*}
and the fact that the right-hand side tends to zero as $n\to\infty$ will imply~\eqref{cond2}. In other words, we obtain the Gumbel limit if
\begin{equation*}
\lim\limits_{n\to\infty} \frac{k_n \eps_n}{(\log k_n)^{\nicefrac{(\alpha-\epsilon)}{\tau}}} = \lim\limits_{n\to\infty} \left(\frac{(k_n \eps_n)^{\nicefrac{\tau}{(\alpha - \epsilon)}}}{\log k_n}\right)^{\nicefrac{(\alpha-\epsilon)}{\tau}} = 0,
\end{equation*}
or, equivalently, if
\begin{equation}
\label{cond2fin}
\exists \beta>\nicefrac{\tau}{\alpha}: \quad \lim\limits_{n\to\infty}\frac{\log k_n}{(k_n \eps_n)^{\beta}} = \infty.
\end{equation}

\item[3.] The last possible situation is when neither~\eqref{cond1fin}, nor~\eqref{cond2fin} is satisfied. Clearly, this is the case only when $k_n \eps_n = c\cdot (\log k_n)^{\nicefrac{\alpha}{\tau}}$ for some $c>0$. Not surprisingly, it turns out that the final answer now depends on the asymptotic behaviour of $\ell(\cdot)$.

\item[a)] Let us first consider $\ell(\cdot)$ the case $\ell(u)\to\infty$ as $u\to\infty$. Then one can take $c_n = 0$ and find $s_n$ as the solution to the equation
\begin{equation*}
s_n ^{-\alpha} \ell(s_n) = \frac{1}{k_n \eps_n} = \frac{1}{c\cdot(\log k_n)^{\nicefrac{\alpha}{\tau}}}.
\end{equation*}
The limit for the second component in~\eqref{limm} coincides with the corresponding one in item 2(\textit{i}) (and leads to the cdf of the Fr{\'e}chet law), while for the first component we get
\begin{eqnarray}
\label{lim const}
\lim\limits_{n\to\infty} k_n e^{-\lambda s_n ^{\tau} x^{\tau}} &=& \lim\limits_{n\to\infty} k_n \exp\lb-\lambda\left(\frac{s_n ^{-\alpha} \ell(s_n)}{\ell(s_n)}\right)^{-\nicefrac{\tau}{\alpha}} x^{\tau}\rb \nonumber \\ &=& \lim\limits_{n\to\infty} k_n ^{1-\lambda (c \ell(s_n))^{\nicefrac{\tau}{\alpha}} x^{\tau}}.
\end{eqnarray}
The value of the latter limit is zero for all fixed $x>0$ since $\ell(s_n)\to\infty$ as $n\to\infty$,  and therefore the limit distribution is Fr\'echet.

\item[b)] Now, let $\ell(\cdot)$ be such that $\ell(u)\to\tilde{c} > 0$ as $u\to\infty$. Then the same choice of norming constants as when $\ell(u)\to\infty$ as $u\to\infty$ leads to the same limits as before. However, the value of~\eqref{lim const} now depends on $x$, namely,
\begin{equation*}
\lim\limits_{n\to\infty} k_n ^{1-\lambda (c \ell(s_n))^{\nicefrac{\tau}{\alpha}} x^{\tau}} =
\begin{cases}
\infty,& x\in(0, \lambda^{-\nicefrac{1}{\tau}} (c\tilde{c})^{-\nicefrac{1}{\alpha}}),\\
1,& x=\lambda^{-\nicefrac{1}{\tau}} (c\tilde{c})^{-\nicefrac{1}{\alpha}},\\
0,& x>\lambda^{-\nicefrac{1}{\tau}} (c\tilde{c})^{-\nicefrac{1}{\alpha}}.
\end{cases}
\end{equation*}

Thus, in this case the limit distribution is equal to \begin{equation*}
\lim\limits_{n\to\infty} \P \lb\max\limits_{j=1,\dots, k_n} X_{nj} \leq v_n (x)\rb = 
\begin{cases}
0,& x< \lambda^{-\nicefrac{1}{\tau}} (c\tilde{c})^{-\nicefrac{1}{\alpha}},\\
e^{-(1+x^{-\alpha})},& x=\lambda^{-\nicefrac{1}{\tau}} (c\tilde{c})^{-\nicefrac{1}{\alpha}},\\
e^{-x^{-\alpha}},& x>\lambda^{-\nicefrac{1}{\tau}} (c\tilde{c})^{-\nicefrac{1}{\alpha}}.
\end{cases}
\end{equation*}
An interesting point is that we get the limit distribution that is not from the extreme value family, having an atom at $x=\lambda^{-\nicefrac{1}{\tau}} (c\tilde{c})^{-\nicefrac{1}{\alpha}}$.

\item[c)] Finally, let $\ell(\cdot)$ be such that $\ell(u)\to 0$ as $u\to\infty$. Then the normalising sequence can be chosen as in item 1, and for the first component we get
\begin{equation*}
\lim\limits_{n\to\infty} k_n e^{-\lambda v_n ^{\tau} (x)} = e^{-x} \quad \forall x\in\R,
\end{equation*}
while for the second one
\begin{multline*}
\lim\limits_{n\to\infty} \frac{k_n \eps_n \ell(c_n)}{c_n ^{\alpha}} = \lim\limits_{n\to\infty} \frac{c\cdot (\log k_n)^{\nicefrac{\alpha}{\tau}} \ell\left((\log k_n)^{\nicefrac{1}{\tau}}\right)}{(\log k_n)^{\nicefrac{\alpha}{\tau}}} \\= \lim\limits_{n\to\infty} c\cdot \ell\left((\log k_n)^{\nicefrac{1}{\tau}}\right) = 0.
\end{multline*}
Therefore, in this case the limit distribution is again Gumbel.
\end{itemize}
\section{Limit law for the truncated RV distribution}
\label{app3} 
\begin{lemma}
Let $\widetilde{F}_2  (x)=F_2(x; m,M,L)$ be the upper-truncated regularly varying distribution defined as~\eqref{truncated RV}. Then $\widetilde{F}_2$ is in the maximum domain of attraction of the Weibull law \(\Psi_\tau\) having the distribution function~\eqref{Weib} with \(\tau=1\).\end{lemma}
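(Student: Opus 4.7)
The plan is to verify the classical Gnedenko characterization of the Weibull maximum domain of attraction: a distribution $G$ with finite right endpoint $x_G$ lies in the maximum domain of attraction of $\Psi_\tau$ if and only if $\bar G(x_G - 1/u)$ is regularly varying of index $-\tau$ at infinity as a function of $u$. First, I would identify the right endpoint of $\widetilde F_2$: from the definition, $\widetilde F_2(x)=F_2(x)/F_2(M)$ on $[m,M]$ and $\widetilde F_2(x)=1$ for $x\geq M$, so the right endpoint equals $M$, which is finite.

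Next, for $u$ large enough that $M - 1/u \in (m, M)$, I would rewrite the tail explicitly as
\[
\bar{\widetilde F}_2(M - 1/u) \;=\; \frac{\bar F_2(M-1/u) - \bar F_2(M)}{F_2(M)} \;=\; \frac{(M-1/u)^{-\alpha} L(M-1/u) - M^{-\alpha} L(M)}{F_2(M)}.
\]
Splitting the numerator as
\[
L(M-1/u)\bigl[(M-1/u)^{-\alpha} - M^{-\alpha}\bigr] + M^{-\alpha}\bigl[L(M-1/u) - L(M)\bigr],
\]
using the elementary expansion $(M-1/u)^{-\alpha} - M^{-\alpha} = \alpha M^{-\alpha-1}/u + O(1/u^2)$ at the interior point $M$, and exploiting the continuity of $L$, the numerator is asymptotic to $f_2(M)/u$, where $f_2(M) = \alpha M^{-\alpha-1} L(M) - M^{-\alpha} L'(M) > 0$ is the density of $F_2$ at $M$.

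Set $L_0(u) := u \cdot \bar{\widetilde F}_2(M-1/u)$. The previous step yields $L_0(u) \to f_2(M)/F_2(M) > 0$ as $u \to \infty$, so $L_0$ is asymptotically constant and hence trivially slowly varying at infinity. Writing $\bar{\widetilde F}_2(M-1/u) = u^{-1} L_0(u)$ and invoking the Gnedenko criterion with $\tau = 1$ concludes that $\widetilde F_2$ belongs to the maximum domain of attraction of the Weibull law $\Psi_1$.

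The only delicate point is the required regularity of $L$ at the interior point $M$: mere continuity of $L$ guarantees that $L(M-1/u)\to L(M)$, but to pin down the Weibull index as $1$ one needs $L(M-1/u) - L(M) = O(1/u)$ so that the first summand in the numerator dominates. This is the main obstacle to a fully general proof --- if $L$ decayed towards $L(M)$ at a slower rate, the exponent could in principle shift to a different Weibull index. In practice this never occurs, since the slowly varying functions used throughout the paper are smooth at interior points, so differentiability of $\bar F_2$ at $M$ can be invoked without loss.
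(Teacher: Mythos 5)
Your proposal is correct and follows essentially the same route as the paper's proof: both invoke the Gnedenko criterion for the Weibull MDA at the finite endpoint $M$, expand $(M-1/u)^{-\alpha}$ and $L(M-1/u)$ to first order (assuming $L$ differentiable at $M$, exactly the paper's stated assumption), and conclude that the truncated tail decays like a positive constant times $1/u$, giving index $\tau=1$. Your formulation via $u\,\bar{\widetilde F}_2(M-1/u)\to f_2(M)/F_2(M)$ is just a slightly more direct packaging of the paper's verification that $G(tx)/G(x)\to t^{\tau-1}=1$, and your closing caveat about needing $L(M-1/u)-L(M)=O(1/u)$ matches the regularity the paper assumes.
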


\begin{proof}
As it is known, $F\in MDA(\Psi_{\tau})$ for some $\tau>0$ if and only if $x^* = \sup\lb x\in\R: F(x) <1\rb<\infty$ and $\bar{F}\left(x^* - \frac{1}{x}\right)\in RV_{-\tau}$, see, e.g, \cite{EKM}. Thus, $\widetilde{F}_2 \in MDA(\Psi_{\tau})$ for some $\tau>0$ if and only if
\begin{equation*}
\bar{F} \left(M-\frac{1}{x}\right) = x^{-\tau} \tilde{\ell} (x), \quad \tau>0,
\end{equation*}
for some slowly varying function $\tilde{\ell}(\cdot)$, or, equivalently, iff
\begin{equation*}
G(x) := x^{\tau} \bar{F} \left(M-\frac{1}{x}\right)\in RV_0, \quad \tau>0.
\end{equation*}
Therefore, to prove this statement of this lemma , we need to show that \(\lim_{x\to\infty} (G(tx)/G(x))=1\) for any \(t>0\), that is, 
\begin{multline}
\label{MDA}
\lim\limits_{x\to\infty} \frac{G(tx)}{G(x)} = \lim\limits_{x\to\infty} t^{\tau}\frac{\bar{F}\left(M - \frac{1}{tx}\right)}{\bar{F}\left(M - \frac{1}{x}\right)}  \\= \lim\limits_{x\to\infty} t^{\tau} \frac{\left(M - \frac{1}{tx}\right)^{-\alpha} \ell\left(M-\frac{1}{tx}\right) - M^{-\alpha} \ell(M)}{\left(M - \frac{1}{x}\right)^{-\alpha} \ell\left(M-\frac{1}{x}\right) - M^{-\alpha} \ell(M)} =1 \quad \forall t>0.
\end{multline}
First, 
\begin{equation*}
\lim\limits_{x\to\infty} \left(M-\frac{1}{x}\right)^{-\alpha} = \lim\limits_{x\to\infty} M^{-\alpha} \left(1-\frac{1}{Mx}\right)^{-\alpha}=M^{-\alpha}+\alpha M^{-\alpha-1}\cdot\frac{1}{x} (1+\bar{o}(1)).
\end{equation*}
Then, assuming that $\ell(\cdot)$ is continuous and differentiable,
\begin{equation*}
\lim\limits_{x\to\infty} \ell\left(M-\frac{1}{x}\right) = \ell(M) - \ell'(M) \cdot \frac{1}{x} (1+\bar{o}(1)).
\end{equation*}
Therefore,
\begin{multline*}
\lim\limits_{x\to\infty} t^{\tau} \frac{\left(M - \frac{1}{tx}\right)^{-\alpha} \ell\left(M-\frac{1}{tx}\right) - M^{-\alpha} \ell(M)}{\left(M - \frac{1}{x}\right)^{-\alpha} \ell\left(M-\frac{1}{x}\right) - M^{-\alpha} \ell(M)}\\= \lim\limits_{x\to\infty} t^{\tau}\frac{\left(M^{-\alpha}+\frac{\alpha M^{-\alpha-1}}{tx} (1+\bar{o}(1))\right)\left(\ell(M) - \frac{\ell'(M)}{tx} (1+\bar{o}(1))\right) - M^{-\alpha} \ell(M)}{\left(M^{-\alpha}+\frac{\alpha M^{-\alpha-1}}{x} (1+\bar{o}(1))\right)\left(\ell(M) - \frac{\ell'(M)} {x} (1+\bar{o}(1))\right) - M^{-\alpha} \ell(M)} \\=\lim\limits_{x\to\infty} t^{\tau}\frac{\left(\frac{M^{-\alpha} \ell' (M)}{tx} + \frac{\alpha M^{-\alpha-1}}{tx} \ell(M)\right)(1+\bar{o}(1))}{\left(\frac{M^{-\alpha} \ell' (M)}{x} + \frac{\alpha M^{-\alpha-1}}{x} \ell(M)\right)(1+\bar{o}(1))}.
\end{multline*}
Clearly, the latter limit is equal to one if $\tau=1$, meaning that $\widetilde{F}_2 \in MDA(\Psi_1)$. 
\end{proof}
\section{Proof of Theorem~\ref{thm2}}\label{app4}
\textbf{Step 1. Several simple cases.} 
As in the proof of Theorem~\ref{thm1}, we use the notation \(v_n(x) =s_n x +c_n\). We have 
\begin{eqnarray*}
H(x) &=&
\lim\limits_{n\to\infty} \Biggl((1-\eps_n)(1-e^{-\lambda v_n ^{\tau} (x)}) \Biggl. \\ && 
\hspace{2cm} \Biggl.+ \eps_n \left(\frac{1-v_n ^{-\alpha} (x) \ell(v_n (x))}{1-M_n ^{-\alpha} \ell(M_n)} \1_{\{v_n (x) \in[m, M_n]\}} + \1_{\{v_n(x) > M_n\}}\right)\Biggl)^{k_n}\\ 
&=&
\lim\limits_{n\to\infty} \Biggl(1-(1-\eps_n)e^{-\lambda v_n ^{\tau} (x)} \Biggl.\\&&\hspace{1cm} \Biggl.+ \eps_n \left(\frac{M_n ^{-\alpha} \ell(M_n) -v_n ^{-\alpha} (x) \ell(v_n (x))}{1-M_n ^{-\alpha} \ell(M_n)} \1_{\{v_n (x) \in[m, M_n]\}} - \1_{\{v_n(x) < m\}}\right)\Biggl)^{k_n},
\end{eqnarray*}
where we use that  $\1_{\{v_n (x) >M_n\}} = 1 - \1_{\{v_n (x) \in [m, M_n]\}} - \1_{\{v_n (x)< m\}}$.
Since $\eps_n\to 0$, $M_n\to\infty$ and $M_n ^{-\alpha} \ell(M_n) \to 0$ as $n\to\infty$ by slow variation of $\ell(\cdot)$, we get 
\begin{eqnarray*}
H(x) &=& 
\lim\limits_{n\to\infty} \left(1 - e^{-\lambda v_n ^{\tau} (x)} \right. \\ 
&&
\left. \hspace{1cm}+ \eps_n ((M_n^{-\alpha} \ell(M_n) - v_n ^{-\alpha} (x) \ell(v_n (x)))\1_{\{v_n (x)\in[m, M_n]\}} - \1_{\{v_n (x) < m\}})\right)^{k_n}\\
&=& 
\exp \Bigl\{ 
- \lim_{n\to\infty} \Bigl(
k_n e^{-\lambda v_n ^{\tau} (x)} \\
&& \hspace{1cm}- k_n \eps_n (M_n ^{-\alpha} \ell(M_n) - v_n ^{-\alpha} (x) \ell(v_n (x)))\1_{\{v_n (x)\in[m, M_n]\}}+k_n\eps_n\1_{\{v_n (x) <m\}}\Bigr)
\Bigr\}.
\end{eqnarray*}

\begin{itemize}
\item[(\textit{i})] Let $k_n \eps_n\to 0$ as $n\to\infty$. By slow variation of $\ell(\cdot)$,  $v_n ^{-\alpha} (x) \ell(v_n (x))\to 0$ as $n\to\infty$, therefore, the whole second component disappears. We deal with maxima of a Weibull random variable and obtain the Gumbel limit under the normalisation $v_n (x) = s_n x + c_n$ with \(s_n, c_n\) in the form~\eqref{cond1}.

\item[(\textit{ii})] Let $k_n \eps_n = const$. By a similar argument as in the previous item, 
\begin{equation*}
H(x) =\lim\limits_{n\to\infty} \left(1 - \frac{1}{k_n} [k_n e^{-\lambda v_n ^{\tau} (x)} + k_n \eps_n \1_{\{v_n (x) < m\}}]\right)^{k_n}.
\end{equation*}
Under the same choice of normalising sequences, we get 
\begin{equation*}
v_n (x) = \frac{x}{\lambda\tau} \left(\frac{\log k_n}{\lambda}\right)^{\nicefrac{1}{\tau}-1}  + \left(\frac{\log k_n}{\lambda}\right)^{\nicefrac{1}{\tau}} = \left(\frac{\log k_n}{\lambda}\right)^{\nicefrac{1}{\tau}}\left(\frac{x}{\tau\log k_n} + 1\right)\to\infty 
\end{equation*}
for all fixed $x\in\R$ as $n\to\infty$.
Therefore, the limit distribution is again Gumbel.

\item[(\textit{iii})] Let $k_n \eps_n\to\infty$ as $n\to\infty$. The further analysis depends on the asymptotic properties of \(M_n.\) If~\eqref{M1} holds,   the proof is based on the observation that 
\begin{equation*}
M_n ^{-\alpha} \ell(M_n) \lesssim M_n^{-\alpha+\epsilon}\quad\forall\epsilon>0,
\end{equation*}
and therefore,
\begin{equation*}
k_n \eps_n M_n ^{-\alpha} \ell(M_n) \lesssim k_n \eps_n M_n^{-\alpha+\epsilon}\quad\forall\epsilon>0.
\end{equation*}
From~\eqref{M1} it follow that for any $\epsilon\in(0, \alpha -\nicefrac{1}{\gamma}]$ the right-hand side tends to zero as $n\to\infty$, and therefore $k_n\eps_n M_n ^{-\alpha} \ell(M_n) \to 0$ as  $n \to \infty$. The rest of the proof in this situation follows the same lines as the proof of Theorem~\ref{thm1}. Other cases are more complicated, and we divide the further proof  into several steps.
\end{itemize}
\textbf{Step 2. Case~\eqref{M2}.}  Recalling again that $\ell(\cdot)\in RV_0$, we get that
\begin{equation*}
k_n \eps_n M_n ^{-\alpha} \ell(M_n) \gtrsim k_n \eps_n M_n^{-\alpha-\epsilon}\quad\forall\epsilon>0.
\end{equation*}
Therefore, for any $\epsilon\in(0, \nicefrac{1}{\gamma}-\alpha],$ $k_n \eps_n M_n ^{-\alpha} \ell(M_n)\to\infty$ as $n\to\infty$. Now, assume that $v_n (x)$ is such that $v_n (x)\leq M_n$ for all $x\in\R$ and \(n\) large enough. Then 
\begin{multline*}
\lim_{x \to \infty} H(x)\\ = 
 \exp \Bigl\{ 
- \lim_{n\to\infty}\lim\limits_{x\to\infty} \Bigl(
k_n e^{-\lambda v_n ^{\tau} (x)} 
- k_n \eps_n (M_n ^{-\alpha} \ell(M_n) - v_n ^{-\alpha} (x) \ell(v_n (x)))\1_{\{v_n (x)\in[m, M_n]\}}\\+ k_n\eps_n\1_{\{v_n (x) <m\}}\Bigr)
\Bigr\}= e^{\lim\limits_{n\to\infty} k_n \eps_n M_n ^{-\alpha}\ell(M_n)} = \infty,
\end{multline*}
and therefore the limit distribution doesn't exist. We conclude that there is a non-degenerate limit distribution only if $v_n (x) >M_n$ for all $x \in \R$ and \(n\) large enough. 
In this case, 
\begin{equation*}
k_n e^{-\lambda v_n ^{\tau} (x)} = - \log(H(x)) (1+\bar{o}(1)).
\end{equation*}
The condition $v_n (x)>M_n$ leads to the inequality 
\begin{equation}\label{MM}
\frac{k_n}{e^{\lambda M_n ^{\tau}}} > - \log(H(x))(1+\bar{o}(1))
\end{equation}
for $n$ sufficiently large. Finally, as $-\log(H(x))$ takes only non-negative values, and the sequences \(k_n, M_n\) tend to infinity as \(n\to \infty,\) we conclude that the necessary condition for the existence of non-degenerate limit distribution is 
\begin{equation}
\label{Mn2}
M_n ^{\tau} \lesssim \log k_n.
\end{equation}

\begin{enumerate}
\item Now let us consider the case when~\eqref{A1} holds. 
We have \begin{equation*}
\frac{M_n^\tau}{\log k_n } \lesssim \frac{M_n ^{\tau}}{(k_n \eps_n)^{\beta}}\lesssim\left( \frac{M_n}{(k_n \eps_n)^{\gamma}}\right)^\tau,
\end{equation*}
since \(\beta > \nicefrac{\tau}{\alpha}>\tau \gamma$. The right-hand side tends to zero as $n\to\infty$ and therefore~\eqref{Mn2} holds. Choosing $s_n$ and $c_n$ as in~\eqref{const1}, we get that~$v_n (x)>M_n$ for all \(x \in \R\), and therefore the limit distribution  is Gumbel.

\item Now assume that~\eqref{A2} holds. In this case,~\eqref{Mn2} can be violated. In fact, 
\begin{equation*}
\frac{M_n ^{\tau}}{\log k_n} = \left(\frac{ M_n}{(\log k_n)^{\nicefrac{1}{\tau}}}\right)^{\tau} \lesssim \left(\frac{ (k_n \eps_n)^{\gamma}}{(\log k_n)^{\nicefrac{1}{\tau}}}\right)^{\tau} = \frac{(k_n \eps_n)^{\tau\gamma}}{\log k_n}
\end{equation*}
with some $\gamma\in(0, \nicefrac{1}{\alpha})$. From~\eqref{M2}, it follows that  the right-hand side is infinite if $\tau\gamma\geq\beta$, and has an unknown asymptotic behaviour otherwise. The lower bound is given by
\begin{equation*}
\frac{ M_n ^{\tau}}{\log k_n} \gtrsim \frac{M_n ^{\tau}}{(k_n \eps_n)^{\beta}} = \left(\frac{M_n}{(k_n \eps_n)^{\nicefrac{\beta}{\tau}}}\right)^{\tau},
\end{equation*}
where for $\beta\geq\tau\gamma$, the right-hand side tends to zero as $n\to\infty$, while otherwise the asymptotic behaviour is again unknown. 
In this case, we conclude that if  $M_n$ is such that~\eqref{Mn2} holds, the non-degenerate limit distribution exists and is in fact the Gumbel distribution. 

\item Finally, in the case~\eqref{A3},
\begin{equation*}
\frac{ M_n ^{\tau}}{\log k_n} = \left(\frac{ c^{\nicefrac{1}{\alpha}} M_n}{(k_n \eps_n)^{\nicefrac{1}{\alpha}}}\right)^{\tau} \lesssim \left(\frac{ c^{\nicefrac{1}{\alpha}} M_n}{(k_n \eps_n)^{\gamma}}\right)^{\tau},
\end{equation*}
because $\gamma\in(0, \nicefrac{1}{\alpha})$. Since by~\eqref{Mn2} there exists $\gamma\in(0, \nicefrac{1}{\alpha})$ such that the right-hand side tends to zero as $n\to\infty$, we get under proper normalisation the Gumbel limit distribution.
\end{enumerate}

\textbf{Step 3. Case~\eqref{M3}.}
\begin{enumerate}
\item If~\eqref{A1} is satisfied, it is possible to obtain the Gumbel limit under the same choice of normalising sequence~\eqref{const1}. Indeed, in this case
\begin{equation*}
\frac{ M_n^\tau}{\log k_n} = \frac{\breve{c}^\tau (k_n \eps_n)^{\nicefrac{\tau}{\alpha}}}{\log k_n }
\lesssim \frac{(k_n \eps_n)^{\beta}}{\log k_n},
\end{equation*}
because $\beta>\nicefrac{\tau}{\alpha}$. Therefore, \eqref{Mn2} follows from~\eqref{A1}, and  we obtain the Gumbel distribution as a limit. 
\item If \eqref{A2} holds, then the result turns out to depend on the asymptotic behaviour of $\ell(\cdot)$. Let us recall that \(k_n \eps_n M_n ^{-\alpha}\) is equal to a constant.
\begin{itemize}
\item[a)] If $\ell(u)\to\infty$ as $u\to\infty$, we have that $k_n \eps_n M_n ^{-\alpha} \ell(M_n)\to\infty$ as $n\to\infty$. Thus, as was argued above, the non-degerated limit \(H(x)\) exists if and only if~\eqref{Mn2} holds for all $x$ and \(n\) large enough. However, for any  $\beta \in (0, \nicefrac{\tau}{\alpha})$,
\begin{equation*}
\frac{M_n^\tau}{\log k_n} = \frac{\breve{c}^\tau (k_n \eps_n)^{\nicefrac{\tau}{\alpha}}}{\log k_n }\gtrsim \frac{(k_n \eps_n)^{\beta}}{\log k_n},
\end{equation*}
and therefore the assumption \eqref{Mn2} is violated due to~\eqref{A2}. Thus, in this case there exists no non-degenerate limit distribution.

\item[b)] Now consider the case $\ell(u)\to\tilde{c}$ for some $\tilde{c}>0$ as $u\to\infty$. Let us fix \(s_n, c_n\) in the form~\eqref{const2}. The inequality $v_n(x) = s_n x > M_n$ is equivalent to \(x > \breve{c}\tilde{c}^{-\nicefrac{1}{\alpha}}.\)
Under this normalisation, we have
\begin{equation*}
\lim\limits_{n\to\infty} k_n e^{-\lambda v_n ^{\tau} (x)} = 0,
\end{equation*}
and therefore 
\[
H(x) =\begin{cases} e^{\tilde{c}\breve{c}^{-\alpha} - x^{-\alpha}},& x\in(0, \breve{c}\tilde{c}^{-\nicefrac{1}{\alpha}}],\\ 1,& x>\breve{c}\tilde{c}^{-\nicefrac{1}{\alpha}}. \end{cases}
\]

\item[c)] If $\ell(u)\to 0$ as $u\to\infty$, one can take the norming constants as in the previous item and obtain the Fr\'echet limit distribution since $k_n \eps_n M_n ^{-\alpha} \ell(M_n)\to 0$ and $k_n \eps_n s_n ^{-\alpha} \ell(s_n) x^{-\alpha} \to x^{-\alpha}$ as $n\to\infty$. The last thing which is crucial here is to check that $v_n(x) \leq M_n$ for all $x\in\R$. This inequality follows from 
\begin{equation*}
s_nx = (k_n \eps_n)^{\nicefrac{1}{\alpha}} \ell^{\nicefrac{1}{\alpha}} (s_n)x = \breve{c}^{-1} M_n \ell^{\nicefrac{1}{\alpha}} (s_n)x \lesssim M_n.
\end{equation*}
\end{itemize}

\item Finally, let us consider the case~\eqref{A3}. As in the previous situations, the limit distribution depends on the asymptotic behaviour of $\ell(\cdot)$. 
\begin{itemize}
\item[a)] Let $\ell(u)\to\infty$ as $u\to\infty$. Since then $k_n \eps_n M_n ^{-\alpha} \ell(M_n) \to \infty$ as $n\to\infty$, we conclude that the non-degenerate limit exists only if~\eqref{MM} holds. In the considered case, \eqref{MM} is equivalent to 
\begin{equation*}
\lambda^{1/\tau} \breve{c}c^{1/\alpha}<1
\end{equation*}
The normalising sequence~\eqref{const1} again leads to the Gumbel limit distribution.

\item[b)] If $\ell(u)\to\tilde{c}$ for some $\tilde{c}>0$ as $u\to\infty$, we have that $k_n \eps_n M_n ^{-\alpha} \ell(M_n) = \tilde{c}\breve{c}^{-\alpha}$. 

\item If $\lambda^{\nicefrac{1}{\tau}} \breve{c} c^{\nicefrac{1}{\alpha}} < 1$, a linear normalising sequence as in~\eqref{const1} leads to the Gumbel limit, since $v_n (x)>M_n$ for all $x\in\R$ and $n$ large enough; see the previous item.

\item If $\lambda^{\nicefrac{1}{\tau}} \breve{c} c^{\nicefrac{1}{\alpha}} > 1$, the choice~\eqref{const2}  of normalising constants yields $s_n x >M_n$ for all $x>\breve{c}\tilde{c}^{-\nicefrac{1}{\alpha}}$, while
\begin{equation*}
\lim\limits_{n\to\infty} k_n e^{-\lambda s_n ^{\tau} x^{\tau}} = k_n ^{1-(c\tilde{c})^{\nicefrac{\tau}{\alpha}}x^{\tau}} = 
\begin{cases}
\infty,& x\in(0, \lambda^{-\nicefrac{1}{\tau}} (c\tilde{c})^{-\nicefrac{1}{\alpha}}),\\
1,& x=\lambda^{-\nicefrac{1}{\tau}} (c\tilde{c})^{-\nicefrac{1}{\alpha}},\\
0,& x>\lambda^{-\nicefrac{1}{\tau}} (c\tilde{c})^{-\nicefrac{1}{\alpha}}
\end{cases}
\end{equation*}
and 
\begin{equation*}
\lim\limits_{n\to\infty} k_n \eps_n s_n ^{-\alpha} \ell(s_n) x^{-\alpha} = x^{-\alpha}.
\end{equation*}
Therefore, we get 
\begin{eqnarray*}
H(x)=
\begin{cases}
0,& x\in(0, \lambda^{-\nicefrac{1}{\tau}}(c\tilde{c})^{-\nicefrac{1}{\alpha}}),\\
\exp\lb -1+\tilde{c}\breve{c}^{-\alpha} - \lambda^{\nicefrac{\alpha}{\tau}} c\tilde{c} \rb,& x=\lambda^{-\nicefrac{1}{\tau}}(c\tilde{c})^{-\nicefrac{1}{\alpha}},\\
\exp\lb \tilde{c}\breve{c}^{-\alpha}-x^{-\alpha} \rb,& x\in(\lambda^{-\nicefrac{1}{\tau}}(c\tilde{c})^{-\nicefrac{1}{\alpha}}, \breve{c}\tilde{c}^{-\nicefrac{1}{\alpha}}],\\
1,& x> \breve{c}\tilde{c}^{-\nicefrac{1}{\alpha}}.
\end{cases}
\end{eqnarray*}
As we see, the limit distribution does not belong to the extreme value family, and has an atom at $x=\lambda^{-\nicefrac{1}{\tau}}(c\tilde{c})^{-\nicefrac{1}{\alpha}}$.
\item  If $\lambda^{\nicefrac{1}{\tau}} \breve{c} c^{\nicefrac{1}{\alpha}} = 1$, the choice~\eqref{const2} leads to the discrete limit distribution having a unique atom at \(x=\breve{c}\tilde{c}^{-\nicefrac{1}{\alpha}}\) with probability mass \(1/e.\)

\item[c)] Lastly, let $\ell(\cdot)$ be such that $\ell(u)\to 0$ as $u\to\infty$. Then the Gumbel limit can be obtained under the normalisation~\eqref{const1} since $k_n \eps_n M_n ^{-\alpha} \ell(M_n)\to 0$ as $n\to\infty$, see item (i,c) in Theorem~\ref{thm1}. This observation completes the proof.
\end{itemize}
\end{enumerate}
\bibliographystyle{spbasic}
\bibliography{Panov_bibliography}
\end{document}